\def\cl{\centerline}
\def\vs{\vspace*}
\def\ssc{\scriptscriptstyle}
\def\BZ{\mathbf{Z}}
\def\sl{\frak{sl}_2(\mathbb{C}_q)}
\def\wsl{\mbox{\footnotesize$\widetilde{\frak{sl}_2(\mathbb{C}_q)}$}}
\def\bz{{\bf 0}}
\def\bk{{\bf k}}
\def\bm{{\bf m}}
\def\bn{{\bf n}}
\def\LL{\mathfrak{L}}
\def\WL{\widetilde{\mathfrak{L}}}
\def\UU{{\cal U}}
\def\HH{\mathcal {H}}
\def\D{\Delta}
\def\r{\gamma}
\def\e{\epsilon}
\def\II{{\cal I}}
\def\JJ{{\cal J}}
\def\ot{\otimes}
\def\A{\mathscr{A}}
\def\Z{\mathbb{Z}{\ssc\,}}
\def\C{\mathbb{C}{\ssc\,}}
\numberwithin{equation}{section}
\newtheorem{theo}{Theorem}[section]
\newtheorem{defi}[theo]{Definition}
\newtheorem{coro}[theo]{Corollary}
\newtheorem{lemm}[theo]{Lemma}
\newtheorem{rema}[theo]{Remark}
\newtheorem{conv}[theo]{Convention}
\begin{document}
\begin{CJK*}{GBK}{song}
\cl{{\large\bf Quantizations of the extended affine
Lie algebra $\wsl$\,$^*$}
\footnote {$\!\!\!\!\!\!\!^*\,$Supported by NSF grant (No 10825101) and the China Postdoctoral Science Foundation Grant (No 201003326).}}\vs{6pt}
\cl{{Ying Xu$^{\,\dag}$},\ {Junbo Li$^{\,\dag,\,\ddag}$}}

\cl{\small
$^{\dag\,}$Wu Wen-Tsun Key Laboratory of \vs{-2pt}Mathematics}
\cl{\small  University of Science and
Technology of China, Hefei 230026, China}

\cl{\small $^{\ddag\,}$School of Mathematics and Statistics, Changshu Institute of Technology, Changshu 215500, China}

\cl{\small E-mail:
xying@mail.ustc.edu.cn,\ \ sd\_junbo@163.com}\vs{5pt}

{\small
\parskip .005 truein
\baselineskip 3pt \lineskip 3pt

\noindent{{\bf Abstract.}\ \
The extended affine Lie algebra $\wsl$ is quantized from three different points of view in this paper, which produces three noncommutative and noncocommutative Hopf algebra
structures, and yield other three quantizations by an isomorphism of $\wsl$ correspondingly. Moreover, two of these quantizations can be restricted to the extended affine Lie algebra $\sl$.

\vs{2pt}

\noindent{\bf Key words:} Quantizations, Lie bialgebras,
 Drinfel'd twists, the extended affine Lie algebra $\wsl$
}\vs{5pt}

\noindent{\it Mathematics Subject Classification (2010):} 17B05,
17B37, 17B62, 17B67.}
\parskip .001 truein\baselineskip 6pt \lineskip 6pt
\vs{10 pt}

\noindent{\bf1. \
Introduction}\setcounter{section}{1}\setcounter{equation}{0}
\\[5pt]

During the investigation of quantum groups, V.Drinfel'd introduced the notion of  Lie bialgebras \cite{D} in 1983. Quantization of Lie algebras and bialgebras is an important way to produce new quantum groups. Quantizations by twists act basically for constructing new quantized enveloping algebras. A universal and functional quantization of Lie bialgebras was developed in \cite{EK,EK2} employing the Tannaka-Krein approach, from which a quantization of any finite dimensional Lie bialgebra defined over a field of characteristic zero (see \cite{EK}) was constructed. Although a general method for twisting both the product and coproduct of a bialgebra does not appear, it is possible
to twist the corresponding coproduct in such a way that it remains compatible with its original multiplication, unit, and counit (see \cite{GZh}). In this paper, we shall concentrate on the quantization being assort to the so-called Drinfel'd twist of the extended affine Lie algebra (EALA) $\wsl$, whose Lie bialgebra structures were determined in \cite{XL}. The EALA $\sl$ was first introduced in \cite{HT} in the sense of quasi-simple Lie algebras and systematically investigated in \cite{AABGP}. Since then, the representation and structure theory of such Lie algebras have been attentively studied (see \cite{AG,B,BGK,BGKE,BL,G1,G2,GZ,LwS,N} and the references therein).

We now introduce the Lie algebra considered in this paper. Denote $\Z$, $\Z^*$, $\C$
the sets of all integers, nonzero integers, complex numbers respectively. Let $\bz=(0,0)$, $\BZ=\Z\times\Z$, $\BZ^*=\Z^*\times\Z^*$. For any $\bm=(m_1,m_2)\in\BZ$, $\bk=(k_1,k_2)\in\BZ^*$, introduce the following elements of $\LL=\sl$:
\begin{eqnarray*}
&&e_{\bm}=
E_{12}x^{m_1}y^{m_2},\ \ \ f_{\bm}=
E_{21}x^{m_1}y^{m_2},\\
&&d=E_{11}-E_{22},\ \ \ \ \ \  g_{\bk}=E_{11}x^{k_1}y^{k_2},\ \ \ \ \ \
h_{\bk}=E_{22}x^{k_1}y^{k_2},
\end{eqnarray*}
which form a basis of $\LL$ with the following relations:
\begin{eqnarray*}
&&[e_{\bm},e_{\bm'}]=[f_{\bm},f_{\bm'}]=[d,h_{\bk}]=[d,g_{\bk}]=[g_{\bk},h_{\bk'}]=0,\\
&&[g_{\bk},e_{\bm}]=q^{k_2m_1}e_{\bk+\bm},\ \ [h_{\bk},e_{\bm}]=-q^{k_1m_2}e_{\bk+\bm},\ \ [d,e_{\bm}]=2e_{\bm},\\
&&[h_{\bk},f_{\bm}]=q^{k_2m_1}f_{\bk+\bm},\ \ [g_{\bk},f_{\bm}]=-q^{k_1m_2}f_{\bk+\bm},\ \ [d,f_{\bm}]=-2f_{\bm},\\
&&[e_{\bm},f_{\bm'}]=
\Big\{\begin{array}{ll}
q^{m_2m'_1}g_{\bm+\bm'}-q^{m'_2m_1}h_{\bm+\bm'}&{\rm if}\ \ \bm+\bm'\neq\bz,\\[6pt]
q^{m_2m'_1}d&{\rm if}\ \ \bm+\bm'=\bz,
\end{array}%\right.
\\
&&[g_{\bk},g_{\bk'}]=
\Big\{\begin{array}{ll}
(q^{k_2k'_1}-q^{k'_2k_1})g_{\bk+\bk'}&{\rm if}\,\ \ \bk+\bk'\neq\bz,\\[6pt]
0&{\rm if}\,\ \ \bk+\bk'=\bz,
\end{array}
\\
&&[h_{\bk},h_{\bk'}]=
\Big\{\begin{array}{ll}
(q^{k_2k'_1}-q^{k'_2k_1})h_{\bk+\bk'}&{\rm if}\,\ \ \bk+\bk'\neq\bz,\\[6pt]
0&{\rm if}\,\ \ \bk+\bk'=\bz.
\end{array}
\end{eqnarray*}
Then $\LL=\oplus_{\bm}\LL_{\bm}$, where $\LL_{\bz}=\C e_{\bz}\oplus \C f_{\bz}+\C d$, $\LL_{\bk}=\C e_{\bk}\oplus \C f_{\bk}\oplus \C g_{\bk}\oplus \C h_{\bk}$. Introduce two degree derivations $d_1$ and $d_2$ on $\LL$:
$$[d_1,L]=m_1L,\ \,[d_2,L]=m_2L\mbox{ \ for $L\in \LL_{\bm}$\ \ and }\ \ [d_1,d_2]=[d,d_1]=[d,d_2]=0.$$
Then we arrive at the EALA $\wsl=\LL\oplus\C d_1\oplus\C d_2$ considered in this paper and denoted by $\WL$ for convenience. Also $\WL$ is $\BZ$-graded: $\WL=\oplus_{\bm\in\BZ}\WL_{\bm}$ with $\WL_{\bk}=\LL_{\bk}$ for $\bk\in\BZ^*$ and $\WL_{\bz}=\LL_{\bz}\oplus\C d_1\oplus\C d_2$. Denote the universal enveloping algebra of the Lie algebra $\wsl$ by $\UU(\wsl)$.

For $\bm=(m_1,m_2),\,\bn=(n_1,n_2)$, $i\in\Z$, introduce the following notations that will be referred to in the main theorem:
\begin{eqnarray*}
&&\gamma_{i,\bm}^y=
\begin{cases}
1,&i=0, y=h,g,f,e,\\[3pt]
0,&i>0, y=h,\\[3pt]
(-1)^i\mbox{$\prod\limits_{p=1}^i$} (q^{n_2(m_1+(p-1)n_1)}-q^{n_1(m_2+(p-1)n_2)}),&i>0, y=g,\\[3pt]
\mbox{$\prod\limits_{p=1}^i$} q^{n_1(m_2+(p-1)n_2)},&i>0, y=f,\\[3pt]
(-1)^i\mbox{$\prod\limits_{p=1}^i$} q^{n_2(m_1+(p-1)n_1)},&i>0, y=e,\\[3pt]
\end{cases}\\[3pt]
&&\alpha_{y_{\bm}}=
\begin{cases}
0,&y=e,\\[3pt]
q^{m_2n_1},&y=g,\\[3pt]
-q^{m_1n_2},&y=h.
\end{cases}\ \ \ \ \ \ \
(1-Et)^{\delta_{y,e}}=
\begin{cases}
1-Et,&y=e,\\[3pt]
1,&y=g,\\[3pt]
1,&y=h.
\end{cases}\\
&&s_{\bm}=q^{n_2m_1+n_1m_2+n_1n_2}.
\end{eqnarray*}
The main result of this paper can be formulated as the following theorem.
\begin{theo}\label{main}
There exist some noncommutative and noncocommutative Hopf algebra
structures $(\UU (\wsl )[[t]], \mu, \tau, \D, \e,S )$ on $\UU
(\wsl)[[t]]$ over $\C[[t]]$, which preserve the product and the counit
of $\UU (\wsl )[[t]]$, admitting the following corresponding coproducts and antipodes:
\begin{item}
  \item {\rm(1)} For $T=\mbox{$\sum\limits_{i=1}^2$}x_id_i$, $E=g_{\bn}$ with $[T,E]=E$ and $r=x_1m_1+x_2m_2$, $y=e,f,g,h$, $i=1,2$,
\begin{eqnarray*}
&\D(y_{\bm})\!\!\!\!&=y_{\bm}\otimes(1-Et)^r+\mbox{$\sum\limits_{j=0}^{\infty}$}
\frac{\gamma_{j,\bm}^y}{j!}T^{<j>}\otimes(1-Et)^{-j}y_{\bm+j\bn}t^j,\\
&S(y_{\bm})\!\!\!\!&=\mbox{$\sum\limits_{j=0}^{\infty}$}\frac{(-1)^{j+1}\gamma_{y_{\bm}}^j}{j!}(1-Et)^{-r}y_{\bm+j\bn}T_{1-c}^{<j>}t^j,\\
&\D(d_i)\!\!\!\!&=d_i\otimes1+1\otimes d_i-n_iT\otimes1+n_iT\otimes(1-Et)^{-1},\\
&\!\!\D(d)\!\!\!\!&=d\otimes1+1\otimes d,\ \ S(d)=-d,\ \
S(d_i)=-d_i+n_iTEt.
\end{eqnarray*}
  \item {\rm(2)} For $T=\mbox{$\sum\limits_{i=1}^2$}x_id_i$, $E=e_{\bn}$,  with $[T,E]=E$ and $r=x_1m_1+x_2m_2$, $y=e,g,h$, $i=1,2$,
\begin{eqnarray*}
&\D(y_{\bm})\!\!\!\!&=y_{\bm}\otimes(1-Et)^r+
1\otimes y_{\bm}+\alpha_{y_{\bm}}T\otimes (1-Et)^{-1}e_{\bm+\bn}t,\\
&\D(f_{\bm})\!\!\!\!&=
\begin{cases}
q^{m_2n_1}T\otimes (1-Et)^{-1}h_{\bm+\bn}t-q^{m_1n_2}T\otimes (1-Et)^{-1}g_{\bm+\bn}t\\
+f_{\bm}\otimes(1-Et)^r+1\otimes f_{\bm}-s_{\bm}T^{<2>}\otimes (1-Et)^{-2}e_{\bm+2\bn}t^2,&\bm+\bn\neq0,\\ \\
f_{-\bn}\otimes(1-Et)^{-1}+
1\otimes f_{-\bn}-q^{-n_2n_1}T\otimes (1-Et)^{-1}dt\\
-q^{-n_1n_2}T^{<2>}\otimes (1-Et)^{-2}Et^2,&\bm+\bn=0,
\end{cases}\\
&\!\D(d)\!\!\!\!&=d\otimes1+1\otimes d+2T\otimes (1-Et)^{-1}Et,\\ &\D(d_i)\!\!\!\!&=d_i\otimes1+1\otimes d_i-n_iT\otimes1+n_iT\otimes(1-Et)^{-1},\\
&S(y_{\bm})\!\!\!\!&=-(1-Et)^{r}y_{\bm}+\alpha_{y_{\bm}}(1-Et)^{r}e_{\bm+\bn}T_{1}t,\\
&S(f_{\bm})\!\!\!\!&=
\begin{cases}
q^{m_2n_1}(1-Et)^{r}h_{\bm+\bn}T_{1}t
-q^{m_1n_2}(1-Et)^{r}g_{\bm+\bn}T_{1}t\\
-(1-Et)^{r}f_{\bm}+s_{\bm}(1-Et)^{r}e_{\bm+2\bn}T_{1}^{<2>}t^2,&\bm+\bn\neq0,\\ \\
q^{-n_1n_2}(1-Et)^{-1}ET_1^{<2>}t^2-(1-Et)^{-1}f_{-\bn}\\
-q^{-n_1n_2}(1-Et)^{-1}dT_{1}t,&\bm+\bn=0,
\end{cases}\\
&S(d)\!\!\!\!&=-d+2ET_1t,\ \ S(d_i)=-d_i+n_iTEt.
\end{eqnarray*}
  \item {\rm(3)} For $T=\frac{1}{2}d$, $E=e_{\bn}$ and $y=e,g,h$, $i=1,2$,
\begin{eqnarray*}
&\D(y_{\bm})\!\!\!\!&=y_{\bm}\otimes(1-Et)^{\delta_{y,e}}+
1\otimes y_{\bm}+\alpha_{y_{\bm}}T\otimes (1-Et)^{-1}e_{\bm+\bn}t,\\
&\D(f_{\bm})\!\!\!\!&=
\begin{cases}
q^{m_2n_1}T\otimes (1-Et)^{-1}h_{\bm+\bn}t-q^{m_1n_2}T\otimes (1-Et)^{-1}g_{\bm+\bn}t\\
+f_{\bm}\otimes(1-Et)^{-1}
+1\otimes f_{\bm}-s_{\bm}T^{<2>}\otimes (1-Et)^{-2}e_{\bm+2\bn}t^2,&\bm+\bn\neq0,\\ \\
f_{-\bn}\otimes(1-Et)^{-1}-q^{-n_2n_1}T\otimes(1-Et)^{-1} dt\\
+1\otimes f_{-\bn}-q^{-n_1n_2}T^{<2>}\otimes (1-Et)^{-2}t^2,&\bm+\bn=0,
\end{cases}\\
&\D(d)\!\!\!\!&=d\otimes1+1\otimes d+2T\otimes (1-Et)^{-1}Et,\\
&\D(d_i)\!\!\!\!&=d_i\otimes1+1\otimes d_i+n_iT\otimes(1-Et)^{-1}-n_iT\otimes1,\\
&S(y_{\bm})\!\!\!\!&=-(1-Et)^{\delta_{y,e}}y_{\bm}
+\alpha_{y_{\bm}}e_{\bm+\bn}T_{1}t,\\
&S(f_{\bm})\!\!\!\!&=
\begin{cases}
q^{m_2n_1}(1-Et)^{-1}h_{\bm+\bn}T_{1}t
-q^{m_1n_2}(1-Et)^{-1}T_1g_{\bm+\bn}t\\
-(1-Et)^{-1}f_{\bm}
+s_{\bm}(1-Et)^{-1}e_{\bm+2\bn}T_{1}^{<2>}t^2,&\bm+\bn\neq0,\\ \\
q^{-n_1n_2}(1-Et)^{-1}ET_{1}^{<2>}t^2-(1-Et)^{-1}f_{-\bn}\\
-q^{-n_1n_2}(1-Et)^{-1}dT_1t,&\bm+\bn=0,
\end{cases}\\
&S(d)\!\!\!\!&=-d+2ET_1t,\ \ \ \ S(d_i)=-d_i+n_iTEt.
\end{eqnarray*}
\end{item}
\end{theo}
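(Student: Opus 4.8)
The plan is to realize all three Hopf structures as Drinfel'd twists of the standard cocommutative bialgebra structure on $\UU(\wsl)[[t]]$, in which every $x\in\wsl$ is primitive, $\D_0(x)=x\ot1+1\ot x$, $S_0(x)=-x$, and $\mu,\tau,\e$ are the usual product, unit and counit. The twisting principle (cf.\ \cite{D,GZh}) is that if $\FF=\sum f_{(1)}\ot f_{(2)}\in\big(\UU(\wsl)[[t]]\big)^{\ot2}$ is invertible and satisfies the $2$-cocycle identity $(\FF\ot1)(\D_0\ot\mathrm{id})(\FF)=(1\ot\FF)(\mathrm{id}\ot\D_0)(\FF)$ together with $(\e\ot\mathrm{id})(\FF)=1=(\mathrm{id}\ot\e)(\FF)$, then, putting $v=\sum S_0(f_{(1)})f_{(2)}$, the sextuple $\big(\UU(\wsl)[[t]],\mu,\tau,\D,\e,S\big)$ with $\D(a)=\FF\,\D_0(a)\,\FF^{-1}$ and $S(a)=v\,S_0(a)\,v^{-1}$ is again a Hopf algebra over $\C[[t]]$ preserving the product, unit and counit. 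Hence for each case it suffices to (i) write down a suitable $\FF$, (ii) check it is a Drinfel'd twist, and (iii) evaluate $\D$ and $S$ on the generators.

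All three cases are governed by one twist. In each we are handed $T,E\in\wsl$ with $[T,E]=E$: namely $E=g_{\bn}$ in (1) and $E=e_{\bn}$ in (2), with $T=x_1d_1+x_2d_2$ subject to $x_1n_1+x_2n_2=1$; and $T=\tfrac12 d$, $E=e_{\bn}$ in (3), where indeed $[\tfrac12 d,e_{\bn}]=e_{\bn}$. Since $T$ and $E$ are primitive, I set $\FF=\sum_{j\ge0}\frac{(-1)^j}{j!}\,T^{<j>}\ot E^jt^j$ with $T^{<j>}=T(T-1)\cdots(T-j+1)$, the Drinfel'd twist attached to $(T,E)$. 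The key structural lemma is that this $\FF$ is a twist. The counit conditions are immediate since $\e$ annihilates $T^{<j>}$ and $E^j$ for $j\ge1$. The cocycle identity rests on two elementary facts: $\D_0(T^{<j>})=\sum_i\binom{j}{i}T^{<i>}\ot T^{<j-i>}$, the Vandermonde identity for falling factorials, and $\D_0(E^j)=\sum_i\binom{j}{i}E^i\ot E^{j-i}$, the primitivity of $E$; substituting these and using $p(T)E=E\,p(T+1)$ for polynomials $p$ (from $[T,E]=E$) reduces both sides to the same binomial double sum. One then records the inverse $\FF^{-1}$ (a like series in the rising factorial of $T$) and the element $v=\sum S_0(f_{(1)})f_{(2)}$, which resums to a power of $(1-Et)$; this is the origin of every factor $(1-Et)^{\pm1}$, $(1-Et)^{r}$ and $(1-Et)^{\delta_{y,e}}$ in the statement.

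With $\FF$ fixed, each coproduct and antipode is obtained by direct evaluation of $\FF\,\D_0(\cdot)\,\FF^{-1}$ and $v\,S_0(\cdot)\,v^{-1}$. Two inputs drive the computation. First, every generator $y_{\bm}$ is an eigenvector of $\mathrm{ad}\,T$: one has $[T,y_{\bm}]=(x_1m_1+x_2m_2)y_{\bm}=r\,y_{\bm}$ in cases (1) and (2), and eigenvalue $\delta_{y,e}$ on $y_{\bm}$ ($y=e,g,h$) and $-1$ on $f_{\bm}$ in case (3); via $p(T)y_{\bm}=y_{\bm}\,p(T+r)$ this eigenvalue is exactly what conjugation by $\FF$ converts into the exponent of the $(1-Et)$ factor attached to $y_{\bm}$. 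Second, the iterated adjoint action $(\mathrm{ad}\,E)^i(y_{\bm})$ is a scalar multiple of $y_{\bm+i\bn}$, and unwinding the defining relations ($[g_{\bn},e_{\bm}]=q^{n_2m_1}e_{\bn+\bm}$, $[g_{\bn},f_{\bm}]=-q^{n_1m_2}f_{\bn+\bm}$ in case (1); $[e_{\bn},g_{\bm}]$, $[e_{\bn},h_{\bm}]$, $[e_{\bn},f_{\bm}]$ in cases (2),(3)) step by step reproduces precisely the scalars $\gamma_{i,\bm}^{y}$, $\alpha_{y_{\bm}}$ and $s_{\bm}$ defined before the theorem. Substituting the two inputs into the conjugation and re-indexing the resulting double series yields the displayed formulas; the shifted factorials $T_{1}^{<j>}$ and $T_{1-c}^{<j>}$ in the antipodes arise because passing $T^{<j>}$ through $S_0$ and through $v$ translates its argument.

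The abstract machinery and the cocycle lemma are routine; the real effort lies in the explicit resummations. The main obstacle is the degenerate case $\bm+\bn=\bz$. There the bracket $[e_{\bm'},f_{\bm''}]$ returns $d$ rather than $q^{m'_2m''_1}g_{\bm'+\bm''}-q^{m''_2m'_1}h_{\bm'+\bm''}$, and $[g,g]$, $[h,h]$ vanish rather than producing a zero-index generator, so the chain $(\mathrm{ad}\,E)^i(f_{\bm})$ terminates differently and the associated series must be re-derived term by term; this is exactly what forces the two-branch formulas for $\D(f_{\bm})$ and $S(f_{\bm})$. A secondary burden is the bookkeeping of the $q$-exponents $n_2(m_1+(p-1)n_1)$ and $n_1(m_2+(p-1)n_2)$ across the products $\prod_{p=1}^{i}$, needed to recognise the closed forms $\gamma_{i,\bm}^{y}$. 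Finally, noncocommutativity follows at once since $\D\neq\D^{\mathrm{op}}$, witnessed by the asymmetric term $n_iT\ot(1-Et)^{-1}$ in $\D(d_i)$, while noncommutativity is automatic because $\UU(\wsl)$ is already noncommutative; comparing the order-$t$ part of $\D-\D^{\mathrm{op}}$ with the Lie bialgebra cobracket of \cite{XL} provides an independent consistency check.
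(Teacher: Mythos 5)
Your proposal is correct and takes essentially the same route as the paper: the identical Drinfel'd twist $\II_0=\sum_{j\ge0}\frac{(-1)^j}{j!}T^{[j]}\ot E^jt^j$ (your ``$T^{<j>}$'' is the paper's falling factorial $T^{[j]}$), the same conjugation formulas $\D=\II_0\D_0\II_0^{-1}$ and $S=uS_0u^{-1}$, and the same evaluation scheme via the $\mathrm{ad}\,T$-eigenvalues, the terminating $(\mathrm{ad}\,E)^i$-action producing $\gamma^y_{i,\bm}$, $\alpha_{y_\bm}$, $s_\bm$, and the separate branch for $\bm+\bn=\bz$. The only cosmetic differences are that the paper cites \cite{CS,LS,SSW} for the cocycle/twist verification rather than re-deriving it, and that your element $v=\mu(S_0\ot \mathrm{Id})(\FF)$ happens to coincide with the paper's $u=\mu(\mathrm{Id}\ot S_0)(\II_0)$ for this particular twist, so your (nonstandard) antipode convention causes no harm here.
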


For $\bm=(m_1,m_2),\,\bn=(n_1,n_2)$, $i\in\Z$, introduce the following notations that will be referred to in the following corollary:
\begin{eqnarray*}
&&\eta_{i,\bm}^y=
\begin{cases}
1,&i=0, y=h,g,f,e,\\[3pt]
0,&i>0, y=g,\\[3pt]
(-1)^i\mbox{$\prod\limits_{p=1}^i$} (q^{n_2(m_1+(p-1)n_1)}-q^{n_1(m_2+(p-1)n_2)}),&i>0, y=h,\\[3pt]
\mbox{$\prod\limits_{p=1}^i$} q^{n_1(m_2+(p-1)n_2)},&i>0, y=e,\\[3pt]
(-1)^i\mbox{$\prod\limits_{p=1}^i$} q^{n_2(m_1+(p-1)n_1)},&i>0, y=f,\\[3pt]
\end{cases}\\[3pt]
&&\beta_{y_{\bm}}=
\begin{cases}
0,&y=f,\\[3pt]
-q^{m_1n_2},&y=g,\\[3pt]
q^{m_2n_1},&y=h,
\end{cases}\ \ \ \ \ \
(1-Et)^{\delta_{y,f}}=
\begin{cases}
1-Et,&y=f,\\[3pt]
1,&y=g,\\[3pt]
1,&y=h.
\end{cases}
\end{eqnarray*}
Combining Theorem \ref{main} and the following involution of $\wsl$:
$$\tau:\,e_{\bm}\leftrightarrow f_{\bm},\ \ g_{\bn}\leftrightarrow h_{\bn},\ \ d\leftrightarrow -d,\ \ d_{i}\leftrightarrow d_{i},\ \ \forall\,\bm\in\BZ,\ \bn\in\BZ^*,\ i=1,2,$$
we can immediately derive the following corollary, which presents other  three quantizations of $\wsl$.

\begin{coro}\label{coro}
There exist some noncommutative and noncocommutative Hopf algebra
structures $(\UU (\wsl )[[t]], \mu, \tau, \D, \e,S )$ on $\UU
(\wsl)[[t]]$ over $\C[[t]]$, which preserve the product and the counit
of $\UU (\wsl )[[t]]$, admitting the following corresponding coproducts and antipodes:
\begin{item}
  \item {\rm(1)} For $T=\mbox{$\sum\limits_{i=1}^2$}x_id_i$, $E=h_{\bn}$ with $[T,E]=E$ and $r=x_1m_1+x_2m_2$, $y=e,f,h,g$, $i=1,2$,
\begin{eqnarray*}
&\D(y_{\bm})\!\!\!\!&=y_{\bm}\otimes(1-Et)^r+\mbox{$\sum\limits_{j=0}^{\infty}$}
\frac{\eta_{j,\bm}^y}{j!}T^{<j>}\otimes(1-Et)^{-j}y_{\bm+j\bn}t^j,\\
&S(y_{\bm})\!\!\!\!&=\mbox{$\sum\limits_{j=0}^{\infty}$}\frac{(-1)^{j+1}
\eta_{j,\bm}^y}{j!}(1-Et)^{-r}y_{\bm+j\bn}T_{1-c}^{<j>}t^j,\\
&\D(d_i)\!\!\!\!&=d_i\otimes1+1\otimes d_i-n_iT\otimes1+n_iT\otimes(1-Et)^{-1},\\
&\!\!\D(d)\!\!\!\!&=d\otimes1+1\otimes d,\ \ S(d)=-d,\ \
S(d_i)=-d_i+n_iTEt.
\end{eqnarray*}
  \item {\rm(2)} For $T=\mbox{$\sum\limits_{i=1}^2$}x_id_i$, $E=f_{\bn}$,  with $[T,E]=E$ and $r=x_1m_1+x_2m_2$, $y=f,g,h$, $i=1,2$,
\begin{eqnarray*}
&\D(y_{\bm})\!\!\!\!&=y_{\bm}\otimes(1-Et)^r+
1\otimes y_{\bm}+\beta_{y_{\bm}}T\otimes (1-Et)^{-1}f_{\bm+\bn}t,\\
&\D(e_{\bm})\!\!\!\!&=
\begin{cases}
q^{m_2n_1}T\otimes (1-Et)^{-1}g_{\bm+\bn}t-q^{m_1n_2}T\otimes (1-Et)^{-1}h_{\bm+\bn}t\\
+e_{\bm}\otimes(1-Et)^r+1\otimes e_{\bm}-s_{\bm}T^{<2>}\otimes (1-Et)^{-2}f_{\bm+2\bn}t^2,&\bm+\bn\neq0,\\ \\
e_{-\bn}\otimes(1-Et)^{-1}+
1\otimes e_{-\bn}+q^{-n_2n_1}T\otimes (1-Et)^{-1}dt\\
-q^{-n_1n_2}T^{<2>}\otimes (1-Et)^{-2}Et^2,&\bm+\bn=0,
\end{cases}\\
&\D(d)\!\!\!\!&=d\otimes1+1\otimes d-2T\otimes (1-Et)^{-1}Et,\\
&\D(d_i)\!\!\!\!&=d_i\otimes1+1\otimes d_i-n_iT\otimes1+n_iT\otimes(1-Et)^{-1},\\
&S(y_{\bm})\!\!\!\!&=-(1-Et)^{r}y_{\bm}+\beta_{y_{\bm}}
(1-Et)^{r}f_{\bm+\bn}T_{1}t,\\
&S(e_{\bm})\!\!\!\!&=
\begin{cases}
q^{m_2n_1}(1-Et)^{r}g_{\bm+\bn}T_{1}t
-q^{m_1n_2}(1-Et)^{r}h_{\bm+\bn}T_{1}t\\
-(1-Et)^{r}e_{\bm}+s_{\bm}(1-Et)^{r}f_{\bm+2\bn}T_{1}^{<2>}t^2,&\bm+\bn\neq0,\\ \\
q^{-n_1n_2}(1-Et)^{-1}ET_1^{<2>}t^2-(1-Et)^{-1}e_{-\bn}\\
+q^{-n_1n_2}(1-Et)^{-1}dT_{1}t,&\bm+\bn=0,
\end{cases}\\
&S(d)\!\!\!\!&=-d-2ET_1t,\ \ S(d_i)=-d_i+n_iTEt.
\end{eqnarray*}
\item {\rm(3)} For $T=\frac{1}{2}d$, $E=f_{\bn}$ and $y=f,g,h$, $i=1,2$,
\begin{eqnarray*}
&\D(y_{\bm})\!\!\!\!&=y_{\bm}\otimes(1-Et)^{\delta_{y,f}}+
1\otimes y_{\bm}+\beta_{y_{\bm}}T\otimes (1-Et)^{-1}e_{\bm+\bn}t,\\
&\D(e_{\bm})\!\!\!\!&=
\begin{cases}
q^{m_2n_1}T\otimes (1-Et)^{-1}g_{\bm+\bn}t-q^{m_1n_2}T\otimes (1-Et)^{-1}h_{\bm+\bn}t\\
+e_{\bm}\otimes(1-Et)^{-1}
+1\otimes e_{\bm}-s_{\bm}T^{<2>}\otimes (1-Et)^{-2}f_{\bm+2\bn}t^2,&\bm+\bn\neq0,\\ \\
e_{-\bn}\otimes(1-Et)^{-1}+q^{-n_2n_1}T\otimes(1-Et)^{-1} dt\\
+1\otimes e_{-\bn}-q^{-n_1n_2}T^{<2>}\otimes (1-Et)^{-2}t^2,&\bm+\bn=0,
\end{cases}\\
&\!\D(d)\!\!\!\!&=d\otimes1+1\otimes d-2T\otimes (1-Et)^{-1}Et,\\
&\D(d_i)\!\!\!\!&=d_i\otimes1+1\otimes d_i+n_iT\otimes(1-Et)^{-1}-n_iT\otimes1,\\
&S(y_{\bm})\!\!\!\!&=-(1-Et)^{\delta_{y,f}}y_{\bm}+\beta_{y_{\bm}}f_{\bm+\bn}T_{1}t,\\
&S(e_{\bm})\!\!\!\!&=
\begin{cases}
q^{m_2n_1}(1-Et)^{-1}g_{\bm+\bn}T_{1}t
-q^{m_1n_2}(1-Et)^{-1}T_1h_{\bm+\bn}t\\
-(1-Et)^{-1}e_{\bm}
+s_{\bm}(1-Et)^{-1}f_{\bm+2\bn}T_{1}^{<2>}t^2,&\bm+\bn\neq0,\\ \\
q^{-n_1n_2}(1-Et)^{-1}ET_{1}^{<2>}t^2-(1-Et)^{-1}f_{-\bn}\\
+q^{-n_1n_2}(1-Et)^{-1}dT_1t,&\bm+\bn=0,
\end{cases}\\
&S(d)\!\!\!\!&=-d-2ET_1t,\ \ \ \ S(d_i)=-d_i+n_iTEt.
\end{eqnarray*}
\end{item}
\end{coro}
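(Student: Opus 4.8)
The plan is to deduce Corollary~\ref{coro} from Theorem~\ref{main} by \emph{transport of the Hopf structure} along the involution $\tau$, so that no Hopf-algebra axiom has to be re-verified from scratch. First I would confirm that $\tau$ is a Lie algebra automorphism of $\wsl$ of order two. Since $\tau$ interchanges $e_\bm\leftrightarrow f_\bm$ and $g_\bn\leftrightarrow h_\bn$, sends $d\mapsto-d$, and fixes $d_1,d_2$, this reduces to checking that each defining bracket is carried to another defining bracket: for instance $[g_\bk,e_\bm]=q^{k_2m_1}e_{\bk+\bm}$ is sent to $[h_\bk,f_\bm]=q^{k_2m_1}f_{\bk+\bm}$, while $[e_\bm,f_{\bm'}]$ is sent to $[f_\bm,e_{\bm'}]=-[e_{\bm'},f_\bm]$, the extra sign being compensated by $d\mapsto-d$ in the relation on $\LL_\bz$; the remaining relations are handled the same routine way. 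Once $\tau$ is a Lie involution it extends uniquely to an algebra involution of $\UU(\wsl)$, and then $\C[[t]]$-linearly and $t$-adically continuously to an algebra automorphism of $\UU(\wsl)[[t]]$, still denoted $\tau$.

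Next I would invoke the general transport principle. Given any of the three Hopf structures $(\UU(\wsl)[[t]],\mu,\D,\e,S)$ of Theorem~\ref{main} and the algebra involution $\tau$, set
\[
\D^{\tau}=(\tau\ot\tau)\circ\D\circ\tau,\qquad S^{\tau}=\tau\circ S\circ\tau .
\]
Then $(\UU(\wsl)[[t]],\mu,\D^{\tau},\e,S^{\tau})$ is again a Hopf algebra, because $\tau$ is by construction a Hopf-algebra isomorphism from the new structure onto the original one; coassociativity, the counit axioms, and the antipode axioms are thus inherited automatically. Moreover the product $\mu$ and the counit $\e$ are preserved, since $\tau$ is a unital algebra automorphism that maps each element of $\wsl$ into the augmentation ideal, whence $\e\circ\tau=\e$. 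This reduces the corollary to the purely formal task of computing $\D^{\tau}$ and $S^{\tau}$ on the generators; note also that the resulting quantizations are genuinely new, as $\tau$ is not the identity modulo $t$.

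Finally I would carry out that computation by substitution, relabelling $z=\tau(y)$ and applying $\tau\ot\tau$ (resp.\ $\tau$) to the formulas of Theorem~\ref{main}. This turns the twist datum $E=g_\bn$ of part~(1) into $E=h_\bn$, and the datum $E=e_\bn$ of parts~(2),(3) into $E=f_\bn$, exactly matching the data in Corollary~\ref{coro}. The scalar coefficient families transform by $\eta^{\,y}_{i,\bm}=\gamma^{\,\tau(y)}_{i,\bm}$ and $\beta_{y_\bm}=\alpha_{\tau(y)_\bm}$, and the exponent $\delta_{y,e}$ becomes $\delta_{y,f}$; these identities are read off directly by comparing the two lists of definitions, since swapping the roles $g\leftrightarrow h$ and $e\leftrightarrow f$ in $\gamma^{\,y}_{i,\bm}$ and $\alpha_{y_\bm}$ yields precisely $\eta^{\,y}_{i,\bm}$ and $\beta_{y_\bm}$. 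The relation $\tau(d)=-d$ accounts for the sign reversals, e.g.\ the term $2T\ot(1-Et)^{-1}Et$ in $\D(d)$ of Theorem~\ref{main}(2) becomes $-2T\ot(1-Et)^{-1}Et$ in Corollary~\ref{coro}(2), and likewise for $S(d)$.

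The step requiring most care is part~(3), where the twist element is $T=\tfrac12 d$: here $\tau(T)=-T$, so applying $\tau$ alters the sign (and, for $T^{<2>}$, the precise form) of every $T$-dependent term, and one must reconcile this with the conventions $\beta_{y_\bm}=\alpha_{\tau(y)_\bm}$ and the chosen $T$ so that the $T$-linear contributions carry $f_{\bm+\bn}$ and the stated signs. Tracking these signs together with the $e/f$ and $g/h$ relabellings uniformly across all generators, including the two sub-cases $\bm+\bn\neq\bz$ and $\bm+\bn=\bz$ of $\D(e_\bm)$ and $S(e_\bm)$, is the only genuinely delicate bookkeeping; the rest is a direct substitution into the formulas of Theorem~\ref{main}.
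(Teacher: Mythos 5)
Your proposal is correct and is exactly the paper's own derivation: the paper justifies the corollary in a single line by transporting the three Hopf structures of Theorem \ref{main} through the involution $\tau$, which is precisely your transport-of-structure argument ($\D^{\tau}=(\tau\ot\tau)\circ\D\circ\tau$, $S^{\tau}=\tau\circ S\circ\tau$, $\e\circ\tau=\e$), only spelled out in more detail, including the correct dictionaries $\eta_{i,\bm}^{y}=\gamma_{i,\bm}^{\tau(y)}$ and $\beta_{y_{\bm}}=\alpha_{\tau(y)_{\bm}}$. Your caution about part (3) is well placed: since $\tau(\tfrac{1}{2}d)=-\tfrac{1}{2}d$, the honest bookkeeping shows the transported structure is the twist associated with the pair $T=-\tfrac{1}{2}d$, $E=f_{\bn}$ (the choice making $[T,E]=E$), so the $T$-linear terms of the printed Corollary \ref{coro}(3) must be read with that sign convention (and with $f_{\bm+\bn}$ in place of $e_{\bm+\bn}$), which is a defect of the statement rather than of your argument.
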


\begin{conv} \rm
If an undefined term appears in an expression, we always treat it as zero, e.g., $g_{\bz}=h_{\bz}=0$.
\end{conv}

\begin{rema} \rm
(1)\ \ We have in fact exhausted all the possibilities of Drinfel'd twist quantizations based on the ``usual'' noncommutative 2-dimensional Lie subalgebras $\{T,E\}$ of $\wsl$ up to scalar multiplications (the ``usual'' means that one of the two elements, i.e., $T$, is in the Cartan subalgebra of $\wsl$). This is also the main reason why we present 6 quantizations above.
(We are currently engaging in an investigation of the Drinfel'd twist quantizations based on ``unusual'' choices of noncommutative 2-dimensional Lie subalgebras of $\wsl$. However, it seems to us that heavy difficulties appear and that new techniques should be introduced during the process of such attempt.)\\

(2)\ \ Although the Lie bialgebra structures on the affine Lie algebra $\sl$ have not been determined yet, we may obtain two quantizations of the affine Lie algebras $\sl$ by restricting the third quantizations in Theorem \ref{main} and Corollary \ref{coro} to $\sl$ by taking $d_1=d_2=0$.
\end{rema}

\vskip12pt

\noindent{\bf2. Definition and preliminary
results}\setcounter{section}{2}\setcounter{theo}{0}\setcounter{equation}{0}

\vskip6pt

We first recall some basic concepts and results based on a unital $\C$-algebra $\mathscr{A}$. For any element $x \in \mathscr{A}, a\in \C,r\in\Z_+,$ set $x^{<r>} = x_0 ^{<r>}$, $x^{[r]} = x_0 ^{[r]}$, where
\begin{eqnarray*}
&x_a ^{<r>}&\!\!\!\!=(x+a)(x+a+1)\cdots(x+a+r-1),
\\[6pt]
&x_a ^{[r]}&\!\!\!\!= (x+a)(x+a-1)\cdots(x+a-r+1).
\end{eqnarray*}

For convenient, set $x_a^{<0>}=x_a^{[0]}=1$. The following lemma can be found in \cite{G} or \cite{GZh}.

\begin{lemm}\label{lem2} For any $x\in\mathscr{A}$, $a, d \in \C $ and $r, s, m\in \Z_+$, one has
\begin{eqnarray}
&&x_a^{<r+s>}=x_a^{<r>}x_{a+r}^{<s>},\ \ \ x_a^{[r+s]}=x_a^{[r]}x_{a-r}^{[s]},\ \ \ x_a ^{[r]}=x_{a-r+1}^{<r>},\label{x<>[]}\\[6pt]
&&\mbox{$\sum\limits_{r+s=m}$}\frac{(-1)^s}{r!s!}x_a ^{[r]}x_d^{<s>}=\dbinom{a-d}{m},\ \ \ \
\mbox{$\sum\limits_{r+s=m}$}\frac{(-1)^s}{r!s!}x_a^{[r]}x_{d-r}^{[s]}=\dbinom{a-d+m-1}{m},
\nonumber
\end{eqnarray}
where the binomial coefficient
\begin{eqnarray*}
\binom{a}{d}=
\begin{cases}
\frac{a(a-1)\cdots(a-d+1)}{d!}, &a\geq d, \\
0, &a<d.
\end{cases}
\end{eqnarray*}
\end{lemm}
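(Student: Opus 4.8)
The plan is to observe first that every expression occurring in the statement lies in the commutative subalgebra $\C[x]\subseteq\A$ generated by the single element $x$, so all the assertions are genuine polynomial identities in $x$ (with the complex shifts $a,d$ as parameters) and may be manipulated freely. The three identities in \eqref{x<>[]} then follow at once from the defining products: writing $x_a^{<r+s>}=(x+a)(x+a+1)\cdots(x+a+r+s-1)$ and splitting the string of $r+s$ consecutive factors after the $r$-th gives $x_a^{<r>}x_{a+r}^{<s>}$, and symmetrically one obtains $x_a^{[r+s]}=x_a^{[r]}x_{a-r}^{[s]}$ for the falling factorial. The relation $x_a^{[r]}=x_{a-r+1}^{<r>}$ is simply the remark that $(x+a)(x+a-1)\cdots(x+a-r+1)$ and $(x+a-r+1)\cdots(x+a-1)(x+a)$ are the same product read in opposite orders, which agree by commutativity.

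For the two convolution identities I would first recast the factorials as (generalized) binomial coefficients. Since $\frac{1}{r!}x_a^{[r]}=\frac{(x+a)(x+a-1)\cdots(x+a-r+1)}{r!}=\dbinom{x+a}{r}$ and, after pulling the sign through each factor, $\frac{(-1)^s}{s!}x_d^{<s>}=\frac{(-x-d)(-x-d-1)\cdots(-x-d-s+1)}{s!}=\dbinom{-x-d}{s}$, the left-hand side of the first convolution becomes $\sum_{r+s=m}\dbinom{x+a}{r}\dbinom{-x-d}{s}$. This is precisely the Vandermonde--Chu convolution $\sum_{r+s=m}\dbinom{w_1}{r}\dbinom{w_2}{s}=\dbinom{w_1+w_2}{m}$ applied with $w_1=x+a$ and $w_2=-x-d$; the crucial point is that $w_1+w_2=a-d$ no longer involves $x$, so the sum collapses to the constant $\dbinom{a-d}{m}$, which is the claim.

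The second convolution reduces to the first by means of the reindexing relation $x_a^{[r]}=x_{a-r+1}^{<r>}$ from \eqref{x<>[]}. Indeed, for a summation constrained by $r+s=m$ one has $x_{d-r}^{[s]}=x_{(d-r)-s+1}^{<s>}=x_{d-m+1}^{<s>}$, which depends on the pair $(r,s)$ only through $s$; substituting this and invoking the first convolution identity with $d$ replaced by $d-m+1$ yields $\dbinom{a-(d-m+1)}{m}=\dbinom{a-d+m-1}{m}$, as required.

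There is no serious obstacle here: the only point that deserves care is the justification that the Vandermonde--Chu identity may be applied with the ring element $x+a$ and complex parameters in the slots normally occupied by integers. This is legitimate because Vandermonde's convolution is a polynomial identity in its two arguments (equivalently, the coefficient form of the formal power series identity $(1+z)^{w_1}(1+z)^{w_2}=(1+z)^{w_1+w_2}$), and a polynomial identity that holds for all non-negative integer specializations holds over any commutative ring; here $\dbinom{w}{k}$ must of course be read throughout as the generalized binomial coefficient $w(w-1)\cdots(w-k+1)/k!$.
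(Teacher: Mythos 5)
Your proof is correct. There is, however, nothing in the paper to compare it against: the authors give no argument for this lemma, stating only that it ``can be found in \cite{G} or \cite{GZh}'', so your write-up supplies a proof the paper omits. Your route is the natural one: the three identities in \eqref{x<>[]} are immediate from splitting/reversing strings of consecutive factors inside the commutative subalgebra $\C[x]\subseteq\A$; the first convolution collapses via Chu--Vandermonde, $\sum_{r+s=m}\binom{w_1}{r}\binom{w_2}{s}=\binom{w_1+w_2}{m}$, applied with $w_1=x+a$, $w_2=-x-d$, the key point being that $w_1+w_2=a-d$ is a scalar (and the substitution of commuting algebra elements into a polynomial identity over $\Q$ is legitimate, exactly as you argue); and your reduction of the second convolution to the first via $x_{d-r}^{[s]}=x_{d-m+1}^{<s>}$, which depends on $(r,s)$ only through the constraint $r+s=m$, is clean and correct. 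One point you raise deserves emphasis rather than apology: the binomial coefficient in the conclusion \emph{must} be read as the generalized polynomial expression $(a-d)(a-d-1)\cdots(a-d-m+1)/m!$, since $a-d$ is an arbitrary complex number. The case-split definition displayed in the lemma only makes literal sense for integer arguments (where it agrees with the polynomial form, the product containing a zero factor when $0\le a-d<m$), and indeed fails for, say, $a-d=\tfrac12$, $m=1$, where the sum equals $\tfrac12$ rather than $0$. The generalized reading is also the one the paper tacitly relies on later, e.g.\ in establishing $\II_cI_d=1\otimes(1-Et)^{(c-d)}$ in \eqref{equa-II-JJ}, so your interpretation is not merely permissible but forced.
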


It is known that there is a natural Hopf algebra structure on the universal enveloping algebra of the Lie algebra $\wsl$, denoted by $(\UU(\wsl),\mu,\tau, \D_0,S_0,\e_0)$ with
\begin{eqnarray*}
\D_0(L_{\bm})=L_{\bm}\ot 1+1\ot L_{\bm},\ \ \ S_0(L_{\bm})=-L_{\bm},\ \ \ \e_0(L_{\bm})=0,\ \ \ \forall\,L_{\bm}\in\WL_{\bm}.
\end{eqnarray*}
Then a deformation of $\UU (\wsl)$ is a topologically free $\C[[t]]$-algebra
$\UU (\wsl)[[t]]$, i.e., it is an associative $\C$-algebra of formal
power series with coefficients in $\UU (\wsl)$ such that $\UU (\wsl
)[[t]]/t\UU (\wsl )[[t]] \cong \UU (\wsl )$. Naturally, $\UU
(\wsl)[[t]]$ is equiped with a Hopf algebra structure induced from
$\UU (\wsl)$. We also denote it by $(\UU (\wsl))[[t]], \mu, \tau, \D_0
, \e_0, S_0 )$.

\begin{defi} \rm Let $(\A, \mu, \tau, \D, S,\e)$ be a
Hopf algebra over a commutative ring. A {\em Drinfel'd twist} $\II$
on $\A$ is an  invertible element of  $ \A \otimes \A$ such that
\begin{eqnarray*}
&\!\!\!\!\!\!\!\! &(\II \otimes 1) (\D \otimes Id ) (\II ) = (1
\otimes \II) (1 \otimes \D ) (\II ),\\
 \label{2.11}&\!\!\!\!\!\!\!\! & (\e \otimes Id )
(\II ) = 1 \otimes 1 = (Id \otimes \e ) (\II).
\end{eqnarray*}
\end{defi}

It is known that the Drinfel'd twists pay an important role in constructing a new Hopf algebra. We shall employ the following Lemma (see \cite{D}) to complete the quantization of $\wsl$ (also see \cite{CS,LS,SSW}).

\begin{lemm} Let $(\A, \mu, \tau, \D_0, \e_0, S_0)$ be a
Hopf algebra over a commutative ring, $\II$ a Drinfel'd twist on
$\A$. Then
\begin{itemize}
 \item[{\rm(1)}] $u = \mu (Id \otimes S ) (\II )$ is invertible
in $\A \otimes \A$ with $u^{-1} = \mu (S \otimes Id ) (\II) .$
 \item[{\rm(2)}] The algebra $(\A, \mu, \tau, \D, \e, S )$ is a new Hopf
 algebra where
 \begin{equation}
 \D = \II \D_0 \II^{-1} , \ \ \,\e=\e_0,\ \ \ S = u S_0 u^{-1}.\label{equa-D-D0}
 \end{equation}
\end{itemize}
\end{lemm}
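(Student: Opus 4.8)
The plan is to run the standard Drinfel'd-twist argument, treating the coalgebra and antipode axioms in turn. Throughout I would fix Sweedler-type notation $\II=\sum_i f_i\ot f^i$ and $\II^{-1}=\sum_j g_j\ot g^j$, write $\D_0(a)=\sum a_{(1)}\ot a_{(2)}$, and use the two hypotheses on $\II$: the $2$-cocycle identity $(\II\ot1)(\D_0\ot Id)(\II)=(1\ot\II)(Id\ot\D_0)(\II)$ and the counital normalizations $(\e_0\ot Id)(\II)=1\ot1=(Id\ot\e_0)(\II)$. Since $\II$ is invertible, $\D(a)=\II\D_0(a)\II^{-1}$ is immediately an algebra homomorphism with $\D(1)=1\ot1$, and $\e=\e_0$ is unchanged, so the new datum is automatically an associative algebra with a multiplicative counit; the genuine content lies in coassociativity, the counit law, and the antipode.

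For coassociativity I would exploit that, as algebra maps $\A\ot\A\to\A\ot\A\ot\A$, one has $(\D\ot Id)=\mathrm{Ad}(\II\ot1)\circ(\D_0\ot Id)$ and $(Id\ot\D)=\mathrm{Ad}(1\ot\II)\circ(Id\ot\D_0)$, each verified on the algebra generators $x\ot1$ and $1\ot y$ using $\D(1)=1\ot1$. Applying these to $\D(a)$ and cancelling the interior $\II^{\pm1}$ pairs collapses $(\D\ot Id)\D(a)$ to $\Phi\,(\D_0\ot Id)\D_0(a)\,\Phi^{-1}$ and $(Id\ot\D)\D(a)$ to $\Psi\,(Id\ot\D_0)\D_0(a)\,\Psi^{-1}$, where $\Phi=(\II\ot1)(\D_0\ot Id)(\II)$ and $\Psi=(1\ot\II)(Id\ot\D_0)(\II)$ are exactly the two sides of the cocycle identity. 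Coassociativity of $\D_0$ together with $\Phi=\Psi$ then yields $(\D\ot Id)\D=(Id\ot\D)\D$. The counit law is lighter: applying the algebra map $\e_0\ot Id$ to $\II\D_0(a)\II^{-1}$ and using $(\e_0\ot Id)(\II)=1$ (whence $(\e_0\ot Id)(\II^{-1})=1$ by multiplicativity) with the counit axiom for $\D_0$ gives $(\e\ot Id)\D=Id$, and symmetrically on the other side. This establishes the bialgebra structure.

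For part (1) I would take $u=\mu(Id\ot S_0)(\II)$ and the inverse displayed in the statement, and check that their product, in either order, collapses to $1$. The mechanism is to apply the antipode identity $\sum a_{(1)}S_0(a_{(2)})=\e_0(a)1=\sum S_0(a_{(1)})a_{(2)}$ to one tensor leg of the cocycle condition after trivializing the complementary leg through a counit normalization; this produces precisely the telescoping that forces the product to $1$. With $u$ invertible, $S(a)=uS_0(a)u^{-1}$ is well defined, and the final task is the antipode law $\mu(S\ot Id)\D(a)=\e_0(a)\,1=\mu(Id\ot S)\D(a)$. Substituting $\D=\II\D_0\II^{-1}$ and $S=uS_0u^{-1}$, and pushing $S_0$ through the products via its anti-homomorphism property, the expression rearranges—again by the cocycle condition and the antipode axiom for $S_0$—into $\e_0(a)\,1$.

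I expect the antipode step, and the invertibility of $u$ that underpins it, to be the main obstacle. Unlike coassociativity, this is not pure conjugation: it requires the precise interplay of the cocycle condition with the anti-multiplicativity of $S_0$, so the bookkeeping of the $\II$, $\II^{-1}$, $u$, $u^{-1}$ factors is where all the real work concentrates, whereas the coalgebra axioms and the multiplicativity statements are essentially formal consequences of $\II$ being an invertible cocycle.
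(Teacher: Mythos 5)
First, be aware that the paper contains no proof of this lemma at all: it is Drinfel'd's twisting theorem, quoted with a pointer to \cite{D} (and to \cite{CS,LS,SSW}, where it is likewise quoted), so your sketch supplies an argument the paper outsources to the literature. The bialgebra half of your sketch is the standard argument and is correct as it stands: writing $(\D\ot Id)\D(a)=\Phi\,(\D_0\ot Id)\D_0(a)\,\Phi^{-1}$ and $(Id\ot\D)\D(a)=\Psi\,(Id\ot\D_0)\D_0(a)\,\Psi^{-1}$ with $\Phi=(\II\ot1)(\D_0\ot Id)(\II)$ and $\Psi=(1\ot\II)(Id\ot\D_0)(\II)$, coassociativity follows from $\Phi=\Psi$, and the counit law from the normalizations because $\e_0\ot Id$ and $Id\ot\e_0$ are algebra maps.

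The genuine gap is in part (1). You propose to take $u=\mu(Id\ot S_0)(\II)$ together with ``the inverse displayed in the statement'' and check that the products collapse to $1$. But the displayed formula is erroneous (a typo in the paper): besides the circular use of $S$ where $S_0$ is meant, and the fact that $u$ lies in $\A$ rather than $\A\ot\A$, the correct inverse is $u^{-1}=\mu(S_0\ot Id)(\II^{-1})$, built from the \emph{inverse} twist, not $\mu(S_0\ot Id)(\II)$. The discrepancy is visible inside this very paper: for the twist $\II_0$ of Section 2 one has $u=\JJ_0=1+TEt+\cdots$ by \eqref{equa-J}, and its inverse is $J_0=\mu(S_0\ot Id)(I_0)=1-TEt+\cdots$ with $I_0=\II_0^{-1}$, by \eqref{equa-II-JJ}; whereas $\mu(S_0\ot Id)(\II_0)=\mbox{$\sum_i$}\frac{1}{i!}T^{<i>}E^it^i=1+TEt+\cdots$ differs from $J_0$ already at order $t$. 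So the verification you propose would fail if run literally on the displayed formula. With the corrected candidate $v=\mu(S_0\ot Id)(\II^{-1})$ your mechanism does go through, in the following precise form: rewrite the cocycle identity as $(1\ot\II^{-1})(\II\ot1)=(Id\ot\D_0)(\II)\,(\D_0\ot Id)(\II^{-1})$ and apply the linear map $a\ot b\ot c\mapsto a\,S_0(b)\,c$ to both sides; the left side factors as $uv$, while on the right the original antipode axiom annihilates the two middle Sweedler legs and the counit normalizations reduce what remains to $1$, giving $uv=1$ (symmetrically $vu=1$). One further correction to your last step: once invertibility is known, the antipode law for $S=uS_0(\cdot)u^{-1}$ does not invoke the cocycle identity again --- $\mu(S\ot Id)\D(a)=\e_0(a)\,uv$ follows from anti-multiplicativity of $S_0$, the original antipode axiom, and $\II^{-1}\II=1\ot1$ alone, so all use of the cocycle condition is concentrated in part (1), exactly as you anticipated.
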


For formal variable $t$, and $c\in\C$, $T,E\in\wsl$ with
$[T,E]=E$, denote
\begin{eqnarray*}
&\II_{c}=\mbox{$\sum\limits_{i=0}^{\infty}$}\frac{(-1)^i}{i!}T_c^{[i]}\otimes E^it^i,\ \ &\JJ_{c}=\mu (Id\otimes S_0)(\II_c),\\
&I_{c}=\mbox{$\sum\limits_{i=0}^{\infty}$}\frac{1}{i!}T_c^{<i>}\otimes E^it^i,\ \ &J_{c}=\mu (S_0\otimes Id) (I_c).
\end{eqnarray*}
The following lemma also holds according to the corresponding lemma in \cite{CS,LS,SSW}.
\begin{lemm}
\begin{eqnarray}
&&\!\!\!\!\!\!\!\!\!\!\!\!\!\!\!
\JJ_c=\mbox{$\sum\limits_{i=0}^{\infty}$}\frac{1}{i!}T_c^{[i]}E^it^i,\ \
J_c=\mbox{$\sum\limits_{i=0}^{\infty}$}\frac{(-1)^i}{i!}T_{-c}^{[i]}E^it^i,\label{equa-J}\\
&&\!\!\!\!\!\!\!\!\!\!\!\!\!\!\!
\D_0(T^{[m]})=\mbox{$\sum\limits_{i=0}^{m}$}\dbinom{m}{i}T^{[i]}_{-c}\otimes T_c^{[m-i]},\\
&&\!\!\!\!\!\!\!\!\!\!\!\!\!\!\!
\II_c I_d=1\otimes (1-Et)^{(c-d)},\ \, \JJ_cJ_d=(1-Et)^{-(c+d)}.\label{equa-II-JJ}
\end{eqnarray}
In particular, $\II_c,\,I_c,\,\JJ_c,\,J_c$ are invertible elements with $\II_c^{-1}=I_c,\ \JJ_c^{-1}=J_{-c}$ and $\II_0$ is a Drinfel'd twist of $(\UU (\wsl))[[t]], \mu, \tau, \D_0, \e_0, S_0 )$.
\end{lemm}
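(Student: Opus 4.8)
The plan is to establish the five displayed identities one at a time and then read off the ``in particular'' assertions, so that the only nontrivial inputs are the combinatorial identities of Lemma \ref{lem2} together with the single relation $[T,E]=E$ (and the elementary facts that $T,E$ are primitive with $\e_0(T)=\e_0(E)=0$, $S_0(T)=-T$, $S_0(E)=-E$). First I would record the two antipode formulas. Since $S_0$ is an anti-automorphism of $\UU(\wsl)$ fixing scalars, and $E$, $T$ each commute with themselves, one gets $S_0(E^i)=(-1)^iE^i$ and, after reversing the order of the mutually commuting linear factors, $S_0(T_c^{<i>})=\prod_{k=0}^{i-1}(-T+c+k)=(-1)^iT_{-c}^{[i]}$. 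Substituting these into $\JJ_c=\mu(Id\otimes S_0)(\II_c)$ and $J_c=\mu(S_0\otimes Id)(I_c)$ and cancelling signs gives the stated expressions for $\JJ_c$ and $J_c$. For the coproduct of $T^{[m]}$ I would use that $\D_0$ is an algebra homomorphism with $\D_0(T)=T\otimes1+1\otimes T$, so that $\D_0(T^{[m]})$ is the $m$-th falling factorial of $A+B$ where $A=(T-c)\otimes1$ and $B=1\otimes(T+c)$ commute and $A+B=\D_0(T)$. The falling-factorial binomial theorem $(A+B)^{[m]}=\sum_i\binom{m}{i}A^{[i]}B^{[m-i]}$ for commuting $A,B$ (immediate from $\sum_m\frac{z^m}{m!}x^{[m]}=(1+z)^x$ and $(1+z)^{A+B}=(1+z)^A(1+z)^B$), together with $A^{[i]}=T_{-c}^{[i]}\otimes1$ and $B^{[m-i]}=1\otimes T_c^{[m-i]}$, then yields exactly $\sum_i\binom{m}{i}T_{-c}^{[i]}\otimes T_c^{[m-i]}$.

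Next I would prove the two multiplicative identities by expanding the defining series and collecting terms of fixed total $t$-degree $m$. For $\II_cI_d$ the product is $\sum_{i,j}\frac{(-1)^i}{i!j!}T_c^{[i]}T_d^{<j>}\otimes E^{i+j}t^{i+j}$; writing $(-1)^i=(-1)^m(-1)^j$ on the shell $i+j=m$ turns the first leg into $(-1)^m$ times the mixed falling/rising sum of Lemma \ref{lem2}, which collapses to the scalar $(-1)^m\binom{c-d}{m}$, and summing against $E^mt^m$ reproduces the (generalized) binomial series for $1\otimes(1-Et)^{c-d}$. The computation of $\JJ_cJ_d$ is the one place where non-commutativity enters: in $\sum_{i,j}\frac{(-1)^j}{i!j!}T_c^{[i]}E^iT_{-d}^{[j]}E^jt^{i+j}$ one must first push $E^i$ past $T_{-d}^{[j]}$, and here $[T,E]=E$ gives $E^ip(T)=p(T-i)E^i$, hence $E^iT_{-d}^{[j]}=T_{-d-i}^{[j]}E^i$; the shell $i+j=m$ then matches the second summation identity of Lemma \ref{lem2} and produces $\binom{c+d+m-1}{m}$, i.e.\ the negative-binomial series for $(1-Et)^{-(c+d)}$.

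Finally, the ``in particular'' statements follow quickly. Specialising $d=c$ gives $\II_cI_c=1\otimes1$ and $d=-c$ gives $\JJ_cJ_{-c}=1$; since each of $\II_c,I_c,\JJ_c,J_c$ has leading term $1\otimes1$ (resp.\ $1$) and is therefore invertible in the $t$-adically complete algebra, these one-sided inverses are automatically two-sided, so $\II_c^{-1}=I_c$ and $\JJ_c^{-1}=J_{-c}$. That $\II_0$ is a Drinfel'd twist requires the two axioms in the definition of a Drinfel'd twist: the counit axiom is immediate because $\e_0(T^{[i]})=0=\e_0(E^i)$ for $i\geq1$, leaving only the $i=0$ term $1\otimes1$ on both sides. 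The cocycle axiom $(\II_0\otimes1)(\D_0\otimes Id)(\II_0)=(1\otimes\II_0)(Id\otimes\D_0)(\II_0)$ is the main obstacle. I would handle it by first proving $(\D_0\otimes Id)(\II_0)=(\II_0)_{13}(\II_0)_{23}$ directly from the coproduct formula for $T^{[m]}$, so that the left-hand side becomes $(\II_0)_{12}(\II_0)_{13}(\II_0)_{23}$, and then expanding $(Id\otimes\D_0)(\II_0)$ via the primitivity of $E$ and reducing the right-hand side to the same threefold product. This reduction again rests on $E^ip(T)=p(T-i)E^i$ and the binomial bookkeeping of Lemma \ref{lem2}; it is the lengthiest step and runs exactly parallel to the corresponding arguments in \cite{CS,LS,SSW}, which is why the statement can be asserted to ``also hold.''
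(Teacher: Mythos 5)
Your proposal is correct, but there is essentially nothing in the paper to compare it against: the paper offers no proof of this lemma, asserting only that it ``also holds according to the corresponding lemma in \cite{CS,LS,SSW}.'' What you have written is, in effect, the proof those references contain, reconstructed correctly. The antipode computations $S_0(E^i)=(-1)^iE^i$ and $S_0(T_c^{<i>})=(-1)^iT_{-c}^{[i]}$ give \eqref{equa-J}; the falling-factorial binomial theorem applied to the commuting elements $(T-c)\otimes 1$ and $1\otimes(T+c)$ gives the coproduct formula; and the two shell computations are exactly the two summation identities of Lemma \ref{lem2}, with the correct handling of the sign transfer $(-1)^i=(-1)^m(-1)^j$ in $\II_cI_d$ and of the commutation $E^iT_{-d}^{[j]}=T_{-d-i}^{[j]}E^i$ in $\JJ_cJ_d$ --- which is indeed the only point where noncommutativity enters. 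The invertibility argument (one-sided inverse plus invertibility of a series with unit leading term in the $t$-adically complete algebra) is sound. The only step you leave as an outline is the cocycle axiom for $\II_0$, and your outline is the right one and does close up: $(\D_0\otimes Id)(\II_0)=(\II_0)_{13}(\II_0)_{23}$ follows from the $c=0$ case of the coproduct formula, and matching $(\II_0)_{23}(Id\otimes\D_0)(\II_0)$ with $(\II_0)_{12}(\II_0)_{13}(\II_0)_{23}$ reduces, after pushing $E^i$ across the middle leg and factoring $T^{[i+j]}=T^{[i]}T^{[j]}_{-i}$ via \eqref{x<>[]}, to the identity $\sum_{j+k=N}\tbinom{N}{j}T^{[j]}\otimes T^{[k]}_{-i}=\sum_{j+k=N}\tbinom{N}{j}T^{[j]}_{-i}\otimes T^{[k]}$, both sides being the $N$-th falling factorial of the single commuting sum $T\otimes1+1\otimes T-i$. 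So, once that last computation is written out, your argument is strictly more complete than the paper's treatment, which delegates the entire lemma to the literature; the trade-off is length versus the paper's economy of citation.
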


\vskip12pt

\noindent{\bf3. Proof of Theorem\ref{main}\,(1)}\setcounter{section}{3}\setcounter{theo}{0}\setcounter{equation}{0}

\vskip6pt

In this section, we take $T=x_1d_1+x_2d_2$ for some $x_1,x_2\in\C$, $\bn=(n_1,n_2)\in\BZ$ and $E=g_{\bn}$ such that $[T,E]=E$. It is easy to see that $x_1n_1+x_2n_2=1$. For $\bm=(m_1,m_2)\in\BZ$, $r=x_1m_1+x_2m_2$, denote
\begin{eqnarray*}
\r_{i,\bm}^g\!\!\!&=&\!\!\!(-1)^i\mbox{$\prod\limits_{p=1}^i$} (q^{n_2(m_1+(p-1)n_1)}-q^{n_1(m_2+(p-1)n_2)}),\\
\r_{i,\bm}^f\!\!\!&=&\!\!\!\mbox{$\prod\limits_{p=1}^i$} q^{n_1(m_2+(p-1)n_2)},\ \ \  \r_{i,\bm}^e=(-1)^i\mbox{$\prod\limits_{p=1}^i$} q^{n_2(m_1+(p-1)n_1)}.
\end{eqnarray*}
\begin{lemm}
The following identities hold in $\UU(\wsl)$ (where $l_{\bm}\in\WL_{\bm}$):
\begin{eqnarray}
&&l_{\bm}T_c^{[i]}=T_{c-r}^{[i]}l_{\bm},\ \ l_{\bm}T_c^{<i>}=T_{c-r}^{<i>}l_{\bm}, \label{Td12Eg-Tl}\\
&&ET_c^{[i]}=T_{c-1}^{[i]}E,\ \ ET_c^{<i>}=T_{c-1}^{<i>}E,\label{Td12Eg-TE}\\
&&f_{\bm}E^j=\mbox{$\sum\limits_{i=0}^{j}$}\dbinom{j}{i}
\r_{i,\bm}^fE^{j-i}f_{\bm+i\bn},\label{Td12Eg-fE}\\
&&e_{\bm}E^j=\mbox{$\sum\limits_{i=0}^{j}$}\dbinom{j}{i}
\r_{i,\bm}^eE^{j-i}e_{\bm+i\bn},\label{Td12Eg-eE}\\
&&g_{\bm}E^j=\mbox{$\sum\limits_{i=0}^{j}$}\dbinom{j}{i}
\r_{i,\bm}^gE^{j-i}g_{\bm+i\bn},\label{Td12Eg-gE}\\
&&dE^j=E^jd,\ \ h_{\bm}E^j=E^jh_{\bm},\label{Td12Eg-dhE}\\
&&d_1E^j=E^jd_1+jn_1E^j,\ \ d_2E^j=E^jd_2+jn_2E^j.\label{Td12Eg-d1d2E}
\end{eqnarray}
\end{lemm}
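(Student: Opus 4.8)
The plan is to prove each identity by induction on $j$, using the defining commutation relations of $\LL$ together with the structural lemma (Lemma~\ref{lem2}) on the $\langle\,\rangle$ and $[\,]$ brackets. The first two pairs, \eqref{Td12Eg-Tl} and \eqref{Td12Eg-TE}, are the easiest: since $T=x_1d_1+x_2d_2$ acts on $\WL_{\bm}$ by the scalar $r=x_1m_1+x_2m_2$ and on $E=g_{\bn}$ by $1$ (because $[T,E]=E$ forces $x_1n_1+x_2n_2=1$), one has $[T,l_{\bm}]=r\,l_{\bm}$, hence $l_{\bm}(T+c)=(T+c-r)l_{\bm}$, and iterating the linear factors in the definition $T_c^{[i]}=(T+c)(T+c-1)\cdots$ shifts every factor's constant by $-r$; the same argument with the shift $-1$ gives \eqref{Td12Eg-TE}. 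These require no induction beyond unwinding the product definitions.

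Next I would treat the ``easy commutation'' relations \eqref{Td12Eg-dhE} and \eqref{Td12Eg-d1d2E}. From the bracket table, $[d,g_{\bk}]=0$ and $[h_{\bk},g_{\bk'}]=0$ (the $g$–$h$ bracket vanishes), so $d$ and $h_{\bm}$ commute with $E=g_{\bn}$, giving $dE^j=E^jd$ and $h_{\bm}E^j=E^jh_{\bm}$ immediately. For the degree derivations, $[d_1,E]=n_1E$ and $[d_2,E]=n_2E$ by the degree action, so $d_1E=E d_1+n_1E$; an easy induction $d_1E^j=E^jd_1+jn_1E^j$ then follows by pushing $d_1$ through one factor of $E$ at a time and collecting the $n_1$-terms (likewise for $d_2$).

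The substantive content is in \eqref{Td12Eg-fE}, \eqref{Td12Eg-eE}, \eqref{Td12Eg-gE}, which I would prove simultaneously in spirit but separately in detail, each by induction on $j$. The base case $j=0$ is trivial ($\gamma^y_{0,\bm}=1$). For the inductive step, I write $l_{\bm}E^{j+1}=(l_{\bm}E^j)E$, apply the inductive hypothesis to expand $l_{\bm}E^j=\sum_i\binom{j}{i}\gamma^y_{i,\bm}E^{j-i}l_{\bm+i\bn}$, and then commute each surviving $l_{\bm+i\bn}$ past the final $E=g_{\bn}$ using the single-step relation $l_{\bm+i\bn}E=E l_{\bm+i\bn}+[l_{\bm+i\bn},g_{\bn}]$. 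Here the bracket produces a scalar multiple of $l_{(\bm+i\bn)+\bn}=l_{\bm+(i+1)\bn}$: for $y=f$ it is $+q^{n_1(m_2+in_2)}f_{\bm+(i+1)\bn}$, for $y=e$ it is $-q^{n_2(m_1+in_1)}e_{\bm+(i+1)\bn}$, and for $y=g$ it is $(q^{n_2(m_1+in_1)}-q^{n_1(m_2+in_2)})$ times $g_{\bm+(i+1)\bn}$, matching exactly the recursion $\gamma^y_{i+1,\bm}=(\text{step factor})\cdot\gamma^y_{i,\bm}$ encoded in the $\prod_{p=1}^i$ definitions. Collecting the two contributions (the one where $l$ stays at index $i$ and the one where it is promoted to $i+1$) and invoking the Pascal identity $\binom{j}{i-1}+\binom{j}{i}=\binom{j+1}{i}$ reassembles the claimed closed form for exponent $j+1$.

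The main obstacle, and the step deserving the most care, is the bookkeeping of the $q$-exponents in the inductive step for \eqref{Td12Eg-gE}: the product $\prod_{p=1}^i(q^{n_2(m_1+(p-1)n_1)}-q^{n_1(m_2+(p-1)n_2)})$ must be shown to satisfy the recursion in which the newly commuted $E$ contributes precisely the $p=i+1$ factor evaluated at the \emph{shifted} index $\bm+i\bn$, and one must verify that the index $\bm$ inside the product is updated consistently when the binomial recombination shifts $i\mapsto i+1$. I expect the $g$-case to be the delicate one because its step factor is a \emph{difference} of two $q$-powers rather than a single monomial, so the telescoping of the product and the Pascal recombination must be checked to interlock correctly; the $e$ and $f$ cases are structurally identical but with a single monomial step factor and are therefore routine once the $g$-case template is in hand.
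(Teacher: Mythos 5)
Your strategy is sound and all of the easy identities (\eqref{Td12Eg-Tl}, \eqref{Td12Eg-TE}, \eqref{Td12Eg-dhE}, \eqref{Td12Eg-d1d2E}) are handled exactly as in the paper, but on the three substantive identities you take a genuinely different route. The paper performs no induction on $j$ at all: it computes, by a one-line induction on $i$, the adjoint powers $(\mathrm{ad}\,E)^i e_{\bm}$, $(\mathrm{ad}\,E)^i f_{\bm}$, $(\mathrm{ad}\,E)^i g_{\bm}$ (each application of $\mathrm{ad}\,E$ multiplies by a scalar and shifts the index by $\bn$), and then substitutes these into the standard associative-algebra identity
\begin{eqnarray*}
a E^j=\mbox{$\sum\limits_{i=0}^{j}$}(-1)^i\dbinom{j}{i}E^{j-i}(\mathrm{ad}\,E)^i(a),
\end{eqnarray*}
which holds because right multiplication by $E$ equals left multiplication by $E$ minus $\mathrm{ad}\,E$, and these two operators commute. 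Your induction on $j$ with the Pascal identity is precisely the proof of this operator identity, inlined and specialized to $E=g_{\bn}$; so the two arguments are mathematically equivalent, but the paper's factorization outsources all the binomial bookkeeping you worry about to one standard formula and reduces the Lie-theoretic content to trivial bracket computations, whereas yours is more self-contained.

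One concrete slip, and it occurs exactly at the step you flagged as delicate: you state the single-step bracket in the $g$-case as $(q^{n_2(m_1+in_1)}-q^{n_1(m_2+in_2)})\,g_{\bm+(i+1)\bn}$, but from $[g_{\bk},g_{\bk'}]=(q^{k_2k'_1}-q^{k'_2k_1})g_{\bk+\bk'}$ with $\bk=\bm+i\bn$, $\bk'=\bn$ one gets
\begin{eqnarray*}
[g_{\bm+i\bn},g_{\bn}]=(q^{n_1(m_2+in_2)}-q^{n_2(m_1+in_1)})\,g_{\bm+(i+1)\bn},
\end{eqnarray*}
the opposite sign (you have written the value of $[g_{\bn},g_{\bm+i\bn}]$, while your recursion $l_{\bm+i\bn}E=El_{\bm+i\bn}+[l_{\bm+i\bn},E]$ requires the bracket in the other order). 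The correct sign is what generates the factor $(-1)^i$ in the definition of $\gamma^g_{i,\bm}$; with the sign as you wrote it, your induction would prove the identity with $\gamma^g_{i,\bm}$ replaced by the bare product $\prod_{p=1}^i(q^{n_2(m_1+(p-1)n_1)}-q^{n_1(m_2+(p-1)n_2)})$, without the $(-1)^i$, which is not the stated lemma. Your $e$- and $f$-case brackets are correct. This is a fixable transcription error rather than a flaw in the method, but it is precisely the interlocking of signs that your final paragraph promises to check.
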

\begin{proof}~Since $[T,l_{\bm}]=(x_1m_1+x_2m_2)l_{\bm}=rl_{\bm}$, $[d_1,E^j]=jn_1E^j$ and $[d_2,E^j]=jn_2E^j$, we obtain equations \eqref{Td12Eg-Tl} and \eqref{Td12Eg-d1d2E}. Equation \eqref{Td12Eg-TE} is a special case of \eqref{Td12Eg-Tl}. Equations \eqref{Td12Eg-dhE} are obtained by $[d,E]=[h,E]=0$.

Using induction on $i$, one has
\begin{eqnarray*}
&&(ad E)^ie_{\bm}=\mbox{$\prod\limits_{p=1}^i$} q^{n_2(m_1+(p-1)n_1)}e_{\bm+i\bn}=(-1)^i\r_{i,\bm}^ee_{\bm+i\bn},\\
&&(ad E)^if_{\bm}=(-1)^i\mbox{$\prod\limits_{p=1}^i$} q^{n_1(m_2+(p-1)n_2)}f_{\bm+i\bn}=(-1)^i\r_{i,\bm}^ff_{\bm+i\bn},\\
&&(ad E)^ig_{\bm}=\mbox{$\prod\limits_{p=1}^i$} (q^{n_2(m_1+(p-1)n_1)}-q^{n_1(m_2+(p-1)n_2)})g_{\bm+i\bn}=(-1)^i\r_{i,\bm}^gg_{\bm+i\bn}.
\end{eqnarray*}
Then, we obtain the equations \eqref{Td12Eg-fE}, \eqref{Td12Eg-eE}, \eqref{Td12Eg-gE} as follows:
\begin{eqnarray*}
&&f_{\bm}E^j=\mbox{$\sum\limits_{i=0}^{j}$}(-1)^i\dbinom{j}{i}E^{j-i}(ad E)^i(f_{\bm})
=\mbox{$\sum\limits_{i=0}^{j}\dbinom{j}{i}\r_{i,\bm}^f$}E^{j-i}f_{\bm+i\bn},\\
&&e_{\bm}E^j=\mbox{$\sum\limits_{i=0}^{j}$}(-1)^i\dbinom{j}{i}E^{j-i}(ad E)^i(e_{\bm})=\mbox{$\sum\limits_{i=0}^{j}$}\dbinom{j}{i}\r_{i,\bm}^eE^{j-i}e_{\bm+i\bn},\\
&&g_{\bm}E^j=\mbox{$\sum\limits_{i=0}^{j}$}(-1)^i\dbinom{j}{i}E^{j-i}(ad E)^i(g_{\bm})=\mbox{$\sum\limits_{i=0}^{j}$}\dbinom{j}{i}\r_{i,\bm}^gE^{j-i}g_{\bm+i\bn}.
\end{eqnarray*}
\end{proof}

\begin{lemm}\label{lemm-3-2}
The following identities hold in $\UU(\wsl)$ (where $l_{\bm}\in\WL_{\bm}$):
\begin{eqnarray}
&&(l_{\bm}\otimes1)I_c=I_{c-r}(l_{\bm}\otimes1),\label{Td12Eg-Il}\\
&&(1\otimes d_1)I_c=n_1I_{c+1}(T_{c}\otimes Et)+I_c(1\otimes d_1),\label{Td12Eg-Id1}\\
&&(1\otimes d_2)I_c=n_2I_{c+1}(T_{c}\otimes Et)+I_c(1\otimes d_2)\label{Td12Eg-Id2},\\
&&(1\otimes h_{\bm})I_c=I_c(1\otimes h_{\bm}),\ \ (1\otimes d)I_c=I_c(1\otimes d),\label{Td12Eg-Ihd}\\
&&(1\otimes f_{\bm})I_c=\mbox{$\sum\limits_{i=0}^{\infty}$}\frac{\r_{i,\bm}^f}{i!}I_{c+i}(T_c^{<i>}\otimes f_{\bm+i\bn}t^i),\label{Td12Eg-If}\\
&&(1\otimes e_{\bm})I_c=\mbox{$\sum\limits_{i=0}^{\infty}$}\frac{\r_{i,\bm}^e}{i!}I_{c+i}(T_c^{<i>}\otimes e_{\bm+i\bn}t^i),\label{Td12Eg-Ie}\\
&&(1\otimes g_{\bm})I_c=\mbox{$\sum\limits_{i=0}^{\infty}$}\frac{\r_{i,\bm}^g}{i!}I_{c+i}(T_c^{<i>}\otimes g_{\bm+i\bn}t^i).\label{Td12Eg-Ig}
\end{eqnarray}
\end{lemm}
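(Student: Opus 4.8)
The plan is to prove all seven identities by a single mechanism: expand $I_c=\sum_{i\ge0}\frac{1}{i!}T_c^{<i>}\otimes E^it^i$ from its definition, commute the extra factor ($l_{\bm}\otimes1$ or $1\otimes l_{\bm}$) into the two tensor slots using the relations established in the previous lemma, and then repackage the resulting series as $I_{c'}(\,\cdot\,)$ for the appropriate shift $c'$. Since the coefficient of each power $t^n$ is a finite sum, every series manipulation is legitimate in the $t$-adic topology, so I only need to keep track of powers of $t$.

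The two leftmost families are immediate. For \eqref{Td12Eg-Il} the $E$-slot is untouched, so $(l_{\bm}\otimes1)I_c=\sum_i\frac{1}{i!}(l_{\bm}T_c^{<i>})\otimes E^it^i$, and \eqref{Td12Eg-Tl} rewrites $l_{\bm}T_c^{<i>}$ as $T_{c-r}^{<i>}l_{\bm}$, which is exactly $I_{c-r}(l_{\bm}\otimes1)$. Likewise \eqref{Td12Eg-Ihd} is a one-line consequence of $dE=Ed$ and $h_{\bm}E=Eh_{\bm}$ from \eqref{Td12Eg-dhE}. For the derivation identities \eqref{Td12Eg-Id1} and \eqref{Td12Eg-Id2} I would substitute $d_1E^i=E^id_1+in_1E^i$ from \eqref{Td12Eg-d1d2E}; the first term reassembles into $I_c(1\otimes d_1)$, while the second produces $n_1\sum_{i\ge1}\frac{1}{(i-1)!}T_c^{<i>}\otimes E^it^i$. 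To recognise this as $n_1I_{c+1}(T_c\otimes Et)$, I would compute the general term of $I_{c+1}(T_c\otimes Et)$ as $\frac{1}{j!}T_{c+1}^{<j>}T_c\otimes E^{j+1}t^{j+1}$ and invoke the splitting rule $T_c^{<1+j>}=T_c^{<1>}T_{c+1}^{<j>}$ from \eqref{x<>[]}, together with the commutativity of polynomials in $T$, to get $T_{c+1}^{<j>}T_c=T_c^{<j+1>}$; the substitution $i=j+1$ then matches the two series.

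The heart of the lemma is the triple \eqref{Td12Eg-If}, \eqref{Td12Eg-Ie}, \eqref{Td12Eg-Ig}, which are formally identical, differing only in whether $\r_{i,\bm}^f$, $\r_{i,\bm}^e$ or $\r_{i,\bm}^g$ occurs. I would treat $f_{\bm}$ as the model case. Starting from $(1\otimes f_{\bm})I_c=\sum_{j\ge0}\frac{1}{j!}T_c^{<j>}\otimes f_{\bm}E^jt^j$, I would insert $f_{\bm}E^j=\sum_{i=0}^j\binom{j}{i}\r_{i,\bm}^fE^{j-i}f_{\bm+i\bn}$ from \eqref{Td12Eg-fE}, use $\frac{1}{j!}\binom{j}{i}=\frac{1}{i!\,(j-i)!}$, and change the index to $k=j-i$. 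This yields the double series $\sum_{i,k\ge0}\frac{\r_{i,\bm}^f}{i!\,k!}T_c^{<i+k>}\otimes E^kf_{\bm+i\bn}t^{i+k}$. On the other side, the general term of $I_{c+i}(T_c^{<i>}\otimes f_{\bm+i\bn}t^i)$ is $\frac{1}{k!}T_{c+i}^{<k>}T_c^{<i>}\otimes E^kf_{\bm+i\bn}t^{k+i}$, and the splitting identity $T_c^{<i+k>}=T_c^{<i>}T_{c+i}^{<k>}$ from \eqref{x<>[]} (with commutativity) collapses $T_{c+i}^{<k>}T_c^{<i>}$ to $T_c^{<i+k>}$; summing over $i$ against $\frac{\r_{i,\bm}^f}{i!}$ reproduces exactly the double series above. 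The same computation with $\r^e$ or $\r^g$ in place of $\r^f$ settles \eqref{Td12Eg-Ie} and \eqref{Td12Eg-Ig}.

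The main obstacle is bookkeeping rather than conceptual: in the last three identities one must interchange the order of the two summations and reindex $(i,j)\mapsto(i,k)$ with care, so that the binomial coefficients coming from the $E^j$-expansions in the previous lemma merge with the Pochhammer-type splitting rule $x_a^{<r+s>}=x_a^{<r>}x_{a+r}^{<s>}$ to reconstruct the shifted factors $I_{c+i}$. Once the bijection between the two double-indexed families of monomials is verified term by term, organised by total $t$-degree $i+k$, each identity follows.
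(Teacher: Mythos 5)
Your proposal is correct and follows essentially the same route as the paper's own proof: expand $I_c$ from its definition, push the extra factor through using the commutation relations of the preceding lemma, reindex the resulting (double) sums, and invoke the splitting rule $x_a^{<r+s>}=x_a^{<r>}x_{a+r}^{<s>}$ from Lemma \ref{lem2} together with commutativity of polynomials in $T$ to repackage everything as shifted factors $I_{c'}$. The index bookkeeping you describe for \eqref{Td12Eg-If}--\eqref{Td12Eg-Ig} is exactly the paper's substitution $i\mapsto i+j$ with $\frac{1}{(i+j)!}\binom{i+j}{j}=\frac{1}{i!\,j!}$, so there is nothing further to add.
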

\begin{proof}
For $l_{\bm}\in\WL_{\bm}$, using formula \eqref{Td12Eg-Tl},
\begin{eqnarray*}
(l_{\bm}\otimes1)I_c\!\!\!&=&\!\!\!(l_{\bm}\otimes1)(\mbox{$\sum\limits_{i=0}^{\infty}$}\frac{1}{i!}T_c^{<i>}\otimes E^it^i)\\
\!\!\!&=&\!\!\!\mbox{$\sum\limits_{i=0}^{\infty}$}\frac{1}{i!}T_{c-r}^{<i>}l_{\bm}\otimes E^it^i=I_{c-r}(l_{\bm}\otimes1).
\end{eqnarray*}
Hence, we obtain equation \eqref{Td12Eg-Il}. It is obvious that $d_1E^i=[d_1,E^i]+E^id_1=in_1E^i+E^id_1$ and $d_2E^i=[d_2,E^i]+E^id_2=in_2E^i+E^id_2$, which mean
\begin{eqnarray*}
(1\otimes d_1)I_c\!\!\!\!\!\!\!&&=\mbox{$\sum\limits_{i=0}^{\infty}$}\frac{1}{i!}T_c^{<i>}\otimes d_1E^it^i
=\mbox{$\sum\limits_{i=0}^{\infty}$}\frac{1}{i!}T_c^{<i>}\otimes(in_1E^i+E^id_1)t^i\\
&&=\mbox{$\sum\limits_{i=0}^{\infty}$}\frac{n_1}{i!}T_c^{<i+1>}\otimes E^{i+1}t^{i+1}+\mbox{$\sum\limits_{i=0}^{\infty}$}\frac{1}{i!}T_c^{<i>}\otimes E^id_1t^i\\
&&=\mbox{$\sum\limits_{i=0}^{\infty}$}\frac{n_1}{i!}T_cT_{c+1}^{<i>}\otimes E^{i+1}t^{i+1}+I_c(1\otimes d_1)\\
&&=n_1I_{c+1}(T_c\otimes Et)+I_c(1\otimes d_1),\\
(1\otimes d_2)I_c\!\!\!\!\!\!\!&&=\mbox{$\sum\limits_{i=0}^{\infty}$}\frac{1}{i!}T_c^{<i>}\otimes d_2E^it^i
=\mbox{$\sum\limits_{i=0}^{\infty}$}\frac{1}{i!}T_c^{<i>}\otimes(in_2E^i+E^id_2)t^i\\
&&=\mbox{$\sum\limits_{i=0}^{\infty}$}\frac{n_2}{i!}T_c^{<i+1>}\otimes E^{i+1}t^{i+1}+\mbox{$\sum\limits_{i=0}^{\infty}$}\frac{1}{i!}T_c^{<i>}\otimes E^id_2t^i\\
&&=\mbox{$\sum\limits_{i=0}^{\infty}$}\frac{n_2}{i!}T_cT_{c+1}^{<i>}\otimes E^{i+1}t^{i+1}+I_c(1\otimes d_2)\\
&&=n_2I_{c+1}(T_c\otimes Et)+I_c(1\otimes d_2).
\end{eqnarray*}
Hence, we complete the proof of equations \eqref{Td12Eg-Id1}, \eqref{Td12Eg-Id2} respectively.

Since $[h_{\bm},E]=[d,E]=0$, for all $h_{\bm}\in\HH_{\bm}$, equation \eqref{Td12Eg-Ihd} is obviously established.

Using formula \eqref{x<>[]}, \eqref{Td12Eg-fE}, we obtain equations \eqref{Td12Eg-If} as follows,
\begin{eqnarray*}
(1\otimes f_{\bm})I_c\!\!\!\!\!\!\!&&=\mbox{$\sum\limits_{i=0}^{\infty}$}\frac{1}{i!}T_c^{<i>}\otimes f_{\bm}E^it^i\\
&&=\mbox{$\sum\limits_{i=0}^{\infty}$}\frac{1}{i!}T_c^{<i>}\otimes
\big(\mbox{$\sum\limits_{j=0}^i$}
\dbinom{i}{j}\r_{j,\bm}^fE^{i-j}f_{\bm+j\bn}\big)t^i\\
&&=\mbox{$\sum\limits_{i=0}^{\infty}$}\mbox{$\sum\limits_{j=0}^{\infty}$}
\frac{1}{(i+j)!}\dbinom{i+j}{j}
\r_{j,\bm}^fT_c^{<i+j>}\otimes E^{i}f_{\bm+j\bn}t^{i+j}\\
&&=\mbox{$\sum\limits_{j=0}^{\infty}$}\frac{1}{j!}\r_{j,\bm}^fT_c^{<j>}
\mbox{$\sum\limits_{i=0}^{\infty}$}
\frac{1}{i!}T_{c+j}^{<i>}\otimes E^if_{\bm+j\bn}t^{i+j}\\
&&=\mbox{$\sum\limits_{j=0}^{\infty}$}\frac{1}{j!}\r_{j,\bm}^f
I_{c+j}(T_{c}^{<j>}\otimes f_{\bm+j\bn}t^j).
\end{eqnarray*}
Similarly, equations \eqref{Td12Eg-Ie} and \eqref{Td12Eg-Ig} are also tenable. Now, we complete the proof of this lemma.
\end{proof}

\begin{lemm}\label{lemm-3-3}
The following identities hold in $\UU(\wsl)$:
\begin{eqnarray*}
&&h_{\bm}J_c=J_{c+r}h_{\bm},\ \ dJ_c=J_{c}d,\\
&&d_1J_c=J_cd_1-n_1J_cT_{-c}Et,\\
&&d_2J_c=J_cd_2-n_2J_cT_{-c}Et,\\
&&f_{\bm}J_c=J_{c+r}(\mbox{$\sum\limits_{j=0}^{\infty}$}
\frac{(-1)^j\r_{j,\bm}^f}{j!}f_{\bm+j\bn}T_{1-c}^{<j>}t^j),\\
&&e_{\bm}J_c=J_{c+r}(\mbox{$\sum\limits_{j=0}^{\infty}$}
\frac{(-1)^j\r_{j,\bm}^e}{j!}e_{\bm+j\bn}T_{1-c}^{<j>}t^j),\\
&&g_{\bm}J_c=J_{c+r}(\mbox{$\sum\limits_{j=0}^{\infty}$}
\frac{(-1)^j\r_{j,\bm}^g}{j!}g_{\bm+j\bn}T_{1-c}^{<j>}t^j).
\end{eqnarray*}
\end{lemm}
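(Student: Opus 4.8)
The plan is to work entirely from the closed form $J_c=\sum_{i\ge0}\frac{(-1)^i}{i!}T_{-c}^{[i]}E^it^i$ recorded in \eqref{equa-J}, commuting each generator through $J_c$ from the left using the commutation relations of the first lemma of this section together with the bracket identities \eqref{x<>[]}. Since $J_c$ is a single element of $\UU(\wsl)[[t]]$ (not a tensor), this is the exact mirror of Lemma \ref{lemm-3-2}: there one slides a generator past $I_c=\sum\frac1{i!}T_c^{<i>}\otimes E^it^i$ and the parameter shifts as $c\mapsto c-r$, whereas $J_c$ carries $T_{-c}$ instead of $T_c$, so here the shift will emerge as $c\mapsto c+r$. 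Each coefficient of $t$ in every identity below is a finite sum, so all manipulations are legitimate in $\UU(\wsl)[[t]]$.

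I would dispose of the commuting generators first. For $h_{\bm}$ and $d$ we have $h_{\bm}E^i=E^ih_{\bm}$ and $dE^i=E^id$ by \eqref{Td12Eg-dhE}, while $h_{\bm}T_{-c}^{[i]}=T_{-c-r}^{[i]}h_{\bm}$ and $dT_{-c}^{[i]}=T_{-c}^{[i]}d$ (the latter because $d$ has degree $0$) by \eqref{Td12Eg-Tl}; substituting these into the series carries $J_c$ to $J_{c+r}$, respectively leaves it fixed, giving $h_{\bm}J_c=J_{c+r}h_{\bm}$ and $dJ_c=J_cd$ immediately. For $d_1$ the new ingredient is \eqref{Td12Eg-d1d2E}, namely $d_1E^i=E^id_1+in_1E^i$; as $d_1$ commutes with $T$, substitution gives $d_1J_c=J_cd_1+n_1\sum_{i\ge1}\frac{(-1)^i}{(i-1)!}T_{-c}^{[i]}E^it^i$. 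It then suffices to identify the residual sum: using the splitting law of \eqref{x<>[]} and $E^iT_{-c}=T_{-c-i}E^i$ (an iterate of \eqref{Td12Eg-TE}) one computes $J_cT_{-c}Et=\sum_{i\ge0}\frac{(-1)^i}{i!}T_{-c}^{[i+1]}E^{i+1}t^{i+1}$, so the residual sum equals $-n_1J_cT_{-c}Et$; this yields the stated formula, and $d_2$ is identical.

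The substantive case is $f_{\bm}$, with $e_{\bm}$ and $g_{\bm}$ following word for word after replacing $\r^f$ by $\r^e,\r^g$ and \eqref{Td12Eg-fE} by \eqref{Td12Eg-eE}, \eqref{Td12Eg-gE}. First commute $f_{\bm}$ past $T_{-c}^{[i]}$ by \eqref{Td12Eg-Tl} (shifting $-c\mapsto-c-r$) and expand $f_{\bm}E^i$ by \eqref{Td12Eg-fE}; writing $i=l+j$ collapses $\binom{l+j}{j}/(l+j)!$ to $\frac1{l!\,j!}$ and leaves $\sum_{l,j}\frac{(-1)^{l+j}}{l!\,j!}\r_{j,\bm}^fT_{-c-r}^{[l+j]}E^lf_{\bm+j\bn}t^{l+j}$. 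The core step is to refactor the generic term: split $T_{-c-r}^{[l+j]}=T_{-c-r}^{[l]}T_{-c-r-l}^{[j]}$, slide $T_{-c-r-l}^{[j]}$ rightward past $E^l$ by \eqref{Td12Eg-TE} (restoring the index to $-c-r$) and then past $f_{\bm+j\bn}$ by \eqref{Td12Eg-Tl} (here $f_{\bm+j\bn}$ has degree $r+j$, since $x_1n_1+x_2n_2=1$), which produces $T_{j-c}^{[j]}$ on the far right; finally the conversion $T_{j-c}^{[j]}=T_{1-c}^{<j>}$ from \eqref{x<>[]} turns the term into $T_{-c-r}^{[l]}E^lf_{\bm+j\bn}T_{1-c}^{<j>}$. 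The double sum then separates as $J_{c+r}\big(\sum_{j\ge0}\frac{(-1)^j\r_{j,\bm}^f}{j!}f_{\bm+j\bn}T_{1-c}^{<j>}t^j\big)$, which is the assertion.

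The one delicate point I anticipate is the bookkeeping in this last refactoring: three separate index shifts must be tracked --- the $-r$ from moving the degree-$r$ generator $f_{\bm}$, the $+l$ recovered when $E^l$ is commuted back through a bracket of length $j$, and the $+(r+j)$ from moving the degree-$(r+j)$ generator $f_{\bm+j\bn}$ --- and they must conspire to convert $T_{-c-r}^{[l+j]}$ cleanly into a left factor $T_{-c-r}^{[l]}$ and a right factor $T_{1-c}^{<j>}$, so that the remaining two series decouple into $J_{c+r}$ times the residual factor. Once this is verified, the rest is a mechanical substitution of the relations from the first two lemmas of this section.
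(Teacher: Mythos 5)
Your proposal is correct and follows essentially the same route as the paper: expand $J_c$ via \eqref{equa-J}, commute the generator through $T_{-c}^{[i]}$ and $E^i$ using \eqref{Td12Eg-Tl}--\eqref{Td12Eg-d1d2E}, reindex the double sum with $i=l+j$, split $T_{-c-r}^{[l+j]}=T_{-c-r}^{[l]}T_{-c-r-l}^{[j]}$, slide the second factor to the far right, and convert $T_{j-c}^{[j]}=T_{1-c}^{<j>}$ so the sums decouple into $J_{c+r}$ times the residual series. The index bookkeeping you flag as delicate works out exactly as you describe, and it is precisely the computation the paper carries out.
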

\begin{proof}
Using the formulae \eqref{x<>[]}, \eqref{equa-J}, \eqref{Td12Eg-Tl} and \eqref{Td12Eg-dhE}, we have
\begin{eqnarray*}
h_{\bm}J_c=\mbox{$\sum\limits_{i=0}^{\infty}$}\frac{(-1)^i}{i!}h_{\bm}T_{-c}^{[i]}E^it^i
=\mbox{$\sum\limits_{i=0}^{\infty}$}\frac{(-1)^i}{i!}T_{-c-r}^{[i]}h_{\bm}E^it^i=J_{c+r}h_{\bm}.
\end{eqnarray*}
Similarly, $dJ_c=J_cd$.
Since formulas \eqref{x<>[]}, \eqref{equa-J}, \eqref{Td12Eg-Tl} and \eqref{Td12Eg-d1d2E}, there are
\begin{eqnarray*}
d_1J_c\!\!\!\!\!\!\!&&=\mbox{$\sum\limits_{i=0}^{\infty}$}\frac{(-1)^i}{i!}d_1T_{-c}^{[i]}E^it^i
=\mbox{$\sum\limits_{i=0}^{\infty}$}\frac{(-1)^i}{i!}T_{-c}^{[i]}d_1E^it^i
=\mbox{$\sum\limits_{i=0}^{\infty}$}\frac{(-1)^i}{i!}T_{-c}^{[i]}(in_1E^i+E^id_1)t^i\\
&&=\mbox{$\sum\limits_{i=0}^{\infty}$}\frac{(-1)^i}{i!}T_{-c}^{[i]}E^id_1t^i
+\mbox{$\sum\limits_{i=0}^{\infty}$}\frac{(-1)^{i+1}n_1}{i!}T_{-c}^{[i]}T_{-c-i}E^{i+1}t^{i+1}
=J_{c}d_1-n_1J_{c}T_{-c}Et,\\
d_2J_c\!\!\!\!\!\!\!&&=\mbox{$\sum\limits_{i=0}^{\infty}$}\frac{(-1)^i}{i!}d_2T_{-c}^{[i]}E^it^i
=\mbox{$\sum\limits_{i=0}^{\infty}$}\frac{(-1)^i}{i!}T_{-c}^{[i]}d_2E^it^i
=\mbox{$\sum\limits_{i=0}^{\infty}$}\frac{(-1)^i}{i!}T_{-c}^{[i]}(in_2E^i+E^id_2)t^i\\
&&=\mbox{$\sum\limits_{i=0}^{\infty}$}\frac{(-1)^i}{i!}T_{-c}^{[i]}E^id_2t^i
+\mbox{$\sum\limits_{i=0}^{\infty}$}\frac{(-1)^{i+1}n_2}{i!}T_{-c}^{[i]}T_{-c-i}E^{i+1}t^{i+1}
=J_{c}d_2-n_2J_{c}T_{-c}Et.
\end{eqnarray*}

The last three equations could be obtained by the formulas \eqref{x<>[]}, \eqref{equa-J} and formulas from \eqref{Td12Eg-Tl} to \eqref{Td12Eg-gE}. For symbol $y_{\bm}=f_{\bm},e_{\bm}$ or $g_{\bm}$, there is
\begin{eqnarray*}
y_{\bm}J_c\!\!\!\!\!\!\!&&=\mbox{$\sum\limits_{i=0}^{\infty}$}
\frac{(-1)^i}{i!}y_{\bm}T_{-c}^{[i]}E^it^i
=\mbox{$\sum\limits_{i=0}^{\infty}$}\frac{(-1)^i}{i!}
T_{-c-r}^{[i]}y_{\bm}E^it^i\\
&&=\mbox{$\sum\limits_{i=0}^{\infty}$}\frac{(-1)^i}{i!}
T_{-c-r}^{[i]}(\mbox{$\sum\limits_{j=0}^{i}$}\dbinom{i}{j}
\r_{j,\bm}^yE^{i-j}y_{\bm+j\bn})t^i\\
&&=\mbox{$\sum\limits_{i=0}^{\infty}$}\mbox{$\sum\limits_{j=0}^{\infty}$}
\frac{(-1)^{i+j}}{i!j!}\r_{j,\bm}^y
T_{-c-r}^{[i]}T_{-c-r-i}^{[j]}E^{i}y_{\bm+j\bn}t^{i+j}\\
&&=\mbox{$\sum\limits_{i=0}^{\infty}$}\frac{(-1)^i}{i!}T_{-c-r}^{[i]}E^it^i
\mbox{$\sum\limits_{j=0}^{\infty}$}\frac{(-1)^j}{j!}
\r_{j,\bm}^yy_{\bm+j\bn}T_{-c+j}^{[j]}t^j\\
&&=J_{c+r}\mbox{$\sum\limits_{j=0}^{\infty}$}\frac{(-1)^j}{j!}
\r_{j,\bm}^yy_{\bm+j\bn}T_{1-c}^{<j>}t^j.
\end{eqnarray*}
\end{proof}

\begin{proof}[Proof of Theorem \ref{main}\,(1)]
Using equations \eqref{equa-D-D0}, \eqref{equa-II-JJ} and all the
lemmas above, for symbol $y_{\bm}=f_{\bm},e_{\bm}$ or $g_{\bm}$, we obtain
\begin{eqnarray*}
&\D(y_{\bm})\!\!\!\!&=\II \D_0(y_{\bm}) \II^{-1}=\II(y_{\bm}\otimes1+1\otimes y_{\bm})I\\
&&=\II I_{-r}(y_{\bm}\otimes1)+\II(\mbox{$\sum\limits_{i=0}^{\infty}$}
\frac{\r_{i,\bm}^y}{i!}I_{i}(T^{<i>}\otimes y_{\bm+i\bn}t^i))\\
&&=(1\otimes(1-Et)^r)(y_{\bm}\otimes1)+
\mbox{$\sum\limits_{i=0}^{\infty}$}\frac{\r_{i,\bm}^y}{i!}
(1\otimes(1-Et)^{-i})(T^{<i>}\otimes y_{\bm+i\bn}t^i)\\
&&=y_{\bm}\otimes(1-Et)^r+\mbox{$\sum\limits_{i=0}^{\infty}$}
\frac{\r_{i,\bm}^y}{i!}
T^{<i>}\otimes(1-Et)^{-i} y_{\bm+i\bn}t^i,\\
&\D(h_{\bm})\!\!\!\!&=\II \D_0(h_{\bm})
\II^{-1}=\II(h_{\bm}\otimes1+1\otimes h_{\bm})I\\
&&=\II I_{-r}(h_{\bm}\otimes1)+\II I (1\otimes h_{\bm})=h_{\bm}\otimes(1-Et)^{r}+1\otimes h_{\bm},\\
&\!\!\!\D(d)\!\!\!\!\!\!&=\II \D_0(d) \II^{-1}=\II(d\otimes1+1\otimes d)I=d\otimes1+1\otimes d,\\
&\D(d_1)\!\!\!\!&=\II \D_0(d_1) \II^{-1}=\II(d_1\otimes1+1\otimes d_1)I\\
&&=\II I(d_1\otimes1)+\II (n_1I_{1}(T\otimes Et)+I(1\otimes d_1))\\
&&=d_1\otimes1+1\otimes d_1+n_1T\otimes(1-Et)^{-1}Et\\
&&=d_1\otimes1+1\otimes d_1-n_1T\otimes1+n_1T\otimes(1-Et)^{-1},\\
&\D(d_2)\!\!\!\!&=\II \D_0(d_2) \II^{-1}=\II(d_2\otimes1+1\otimes d_2)I\\
&&=\II I(d_2\otimes1)+\II (n_2I_{1}(T\otimes Et)+I(1\otimes d_2))\\
&&=d_2\otimes1+1\otimes d_2+n_2T\otimes(1-Et)^{-1}Et\\
&&=d_2\otimes1+1\otimes d_2-n_2T\otimes1+n_2T\otimes(1-Et)^{-1}.
\end{eqnarray*}

In addition, we also obtain,
\begin{eqnarray*}
&S(y_{\bm})\!\!\!\!&=\JJ s_0(y_{\bm}) J=-\JJ y_{\bm}J=-\JJ(J_{r}\mbox{$\sum\limits_{j=0}^{\infty}$}
\frac{(-1)^j\r_{j,\bm}^y}{j!}y_{\bm+j\bn}T_{1-c}^{<j>}t^j)\\
&&=\mbox{$\sum\limits_{j=0}^{\infty}$}\frac{(-1)^{j+1}
\r_{j,\bm}^y}{j!}(1-Et)^{-r}y_{\bm+j\bn}T_{1-c}^{<j>}t^j,\\
&S(h_{\bm})\!\!\!\!&=\JJ s_0(h_{\bm}) J=-\JJ h_{\bm}J=-\JJ J_r h_{\bm}=-(1-Et)^{-r}h_{\bm},\\
&S(d)\!\!\!\!&=\JJ s_0(d) J=-\JJ d J=-d,\\
&S(d_1)\!\!\!\!&=\JJ s_0(d_1) J=-\JJ d_1J=-\JJ(Jd_1-n_1JTEt)=-d_1+n_1TEt,\\
&S(d_2)\!\!\!\!&=\JJ s_0(d_2) J=-\JJ d_2 J=-\JJ(Jd_2-n_2JTEt)=-d_2+n_2TEt.
\end{eqnarray*}
\end{proof}

\vskip12pt

\noindent{\bf4. Proof of Theorem \ref{main}(2)}\setcounter{section}{4}\setcounter{theo}{0}\setcounter{equation}{0}

\vskip6pt

In this section, we take $T=x_1d_1+x_2d_2$ for some $x_1,x_2\in\C$, $\bn=(n_1,n_2)\in\BZ$ and $E=e_{\bn}$ such that $[T,E]=E$. It is easy to see that $x_1n_1+x_2n_2=1$. The expressions only referring to $T$ in Section 3 are also tenable in this section, such as expressions \eqref{Td12Eg-Tl}, \eqref{Td12Eg-TE}, \eqref{Td12Eg-Il}.
For $\bm=(m_1,m_2)\in\BZ$, $r=x_1m_1+x_2m_2$, denote
\begin{eqnarray*}
s_{\bm}=q^{m_1n_2+m_2n_1+n_1n_2}.
\end{eqnarray*}

\begin{lemm}\label{lemm-4-1}
The following identities hold in $\UU(\wsl)$:
\begin{eqnarray}
&&e_{\bm}E^j=E^je_{\bm},\ \ dE^j=E^jd+2jE^j,\label{Td12Ee-deE}\\
&&d_1E^j=E^jd_1+jn_1E^j,\ \ d_2E^j=E^jd_2+jn_2E^j,\label{Td12Ee-d1d2E}\\
&&f_{\bm}E^j=\begin{cases}
jq^{m_2n_1}E^{j-1}h_{\bm+\bn}-jq^{m_1n_2}E^{j-1}g_{\bm+\bn}\\
+E^jf_{\bm}-2s_{\bm}\dbinom{j}{2}E^{j-2}e_{\bm+2\bn},&\bm+\bn\neq0,\\
E^jf_{-\bn}-jq^{-n_1n_2}E^{j-1}d-2\dbinom{j}{2}q^{-n_2n_1}E^{j-1},&\bm+\bn=0,
\end{cases}\label{Td12Ee-fE}\\
&&g_{\bm}E^j=E^jg_{\bm}+jq^{m_2n_1}E^{j-1}e_{\bm+\bn},\label{Td12Ee-gE}\\
&&h_{\bm}E^j=E^jh_{\bm}-jq^{m_1n_2}E^{j-1}e_{\bm+\bn}.\label{Td12Ee-hE}
\end{eqnarray}
\end{lemm}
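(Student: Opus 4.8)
The plan is to reuse the mechanism already employed in Lemma 3.1: in any associative algebra the finite expansion
$$x E^j=\sum_{i=0}^{j}(-1)^i\binom{j}{i}E^{j-i}(ad\, E)^i(x)$$
holds for every $x$, and it follows from a one-line induction on $j$ (indeed it is exactly the identity invoked there for $E=g_\bn$, whose derivation uses only associativity). Consequently each of the seven asserted identities reduces to computing the iterated adjoint action $(ad\, E)^i(x)$ for $E=e_\bn$ and checking where it terminates; the only inputs are the defining bracket relations of $\LL$. First I would record that $(ad\, E)^i(x)=0$ for $i$ beyond a small bound in each case, so the sum is always finite.

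For the six ``easy'' identities I would simply read off the first bracket. Since $[e_\bn,e_\bm]=0$ we get $(ad\, E)(e_\bm)=0$ and hence $e_\bm E^j=E^je_\bm$. For $x\in\{d,d_1,d_2\}$ the bracket $[e_\bn,x]=-[x,e_\bn]$ is a scalar multiple of $E=e_\bn$ itself (equal to $2e_\bn$, $n_1e_\bn$, $n_2e_\bn$ respectively), so $(ad\, E)^2(x)=0$ and only the $i=1$ term survives, producing the $+2j$, $+jn_1$, $+jn_2$ corrections. For $g_\bm$ and $h_\bm$ one computes $[e_\bn,g_\bm]=-q^{m_2n_1}e_{\bm+\bn}$ and $[e_\bn,h_\bm]=q^{m_1n_2}e_{\bm+\bn}$; each lands in an $e$-vector, which is annihilated by a further $ad\, E$, so once again only the $i=1$ term contributes and yields the two stated formulas.

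The genuine work, and the only real obstacle, is the $f_\bm$ identity, where $ad\, E$ does not terminate at first order. For $\bm+\bn\neq\bz$ one has $[e_\bn,f_\bm]=q^{m_1n_2}g_{\bm+\bn}-q^{m_2n_1}h_{\bm+\bn}$, so I would apply $ad\, E$ a second time to the $g$- and $h$-pieces using the brackets just computed. Both resulting $e_{\bm+2\bn}$ contributions come with a minus sign and carry exponents $q^{m_1n_2+(m_2+n_2)n_1}$ and $q^{m_2n_1+(m_1+n_1)n_2}$; the key point to verify is that these exponents coincide, collapsing the second-order term to $-2s_\bm e_{\bm+2\bn}$ with $s_\bm=q^{m_1n_2+m_2n_1+n_1n_2}$. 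Since $(ad\, E)^3(f_\bm)=0$, the terms $i=0,1,2$ then assemble into exactly the claimed expression. The boundary case $\bm+\bn=\bz$ must be treated separately: there $[e_\bn,f_{-\bn}]=q^{-n_1n_2}d$ lands in the Cartan element $d$ rather than in $g\oplus h$, and iterating through $[e_\bn,d]=-2e_\bn=-2E$ gives $(ad\, E)^2(f_{-\bn})=-2q^{-n_1n_2}E$, after which the series again stops. Keeping the $q$-exponent bookkeeping consistent between the two cases, and remembering the convention $g_\bz=h_\bz=0$, is the only delicate part; everything else is mechanical substitution into the displayed expansion.
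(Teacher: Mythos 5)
Your proposal is correct and follows essentially the same route as the paper: both expand $xE^j=\sum_{i=0}^{j}(-1)^i\binom{j}{i}E^{j-i}(\mathrm{ad}\,E)^i(x)$ and then compute the iterated adjoint actions of $E=e_{\bn}$ on each generator, with the two-case analysis for $f_{\bm}$ (including the collapse of the two second-order exponents to $-2s_{\bm}e_{\bm+2\bn}$ and the degenerate case $\bm+\bn=\bz$ passing through $d$) exactly as in the paper's proof.
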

\begin{proof}~Equations \eqref{Td12Ee-deE} and \eqref{Td12Ee-d1d2E} are obtained by formulas $[e_{\bm},E]=0$, $[d,E]=2E$ and $[d_1,E^j]=jn_1E^j$, $[d_2,E^j]=jn_2E^j$ respectively .

By the definition, if $\bm+\bn\neq0$, there is
\begin{eqnarray*}
&&(ad E)^i(f_{\bm})=(ad e_{\bn})^i(f_{\bm})=\begin{cases}
f_{\bm},&i=0,\\
q^{m_1n_2}g_{\bm+\bn}-q^{m_2n_1}h_{\bm+\bn},&i=1,\\
-2s_{\bm}e_{\bm+2\bn},&i=2,\\
0,&i>2,
\end{cases}
\end{eqnarray*}
where $s_{\bm}=q^{m_1n_2+m_2n_1+n_1n_2}\in\C$.
Thus,
\begin{eqnarray*}
f_{\bm}E^j\!\!\!\!\!\!\!\!\!&&=\mbox{$\sum\limits_{i=0}^{j}$}(-1)^i\dbinom{j}{i}E^{j-i}(ad E)^i(f_{\bm})\\
&&=E^jf_{\bm}-jE^{j-1}(q^{m_1n_2}g_{\bm+\bn}-q^{m_2n_1}h_{\bm+\bn})
+\dbinom{j}{2}E^{j-2}(-2s_{\bm}e_{\bm+2\bn})\\
&&=E^jf_{\bm}-jq^{m_1n_2}E^{j-1}g_{\bm+\bn}+jq^{m_2n_1}E^{j-1}h_{\bm+\bn}
-2s_{\bm}\dbinom{j}{2}E^{j-2}e_{\bm+2\bn}.
\end{eqnarray*}
Similarly, there are
\begin{eqnarray*}
&&(ad E)^i(f_{-\bn})=(ad e_{\bn})^i(f_{-\bn})=\begin{cases}
f_{-\bn},&i=0,\\
q^{-n_1n_2}d,&i=1,\\
-2q^{-n_1n_2}e_{\bn},&i=2,\\
0,&i>2,
\end{cases}
\end{eqnarray*}
and
\begin{eqnarray*}
f_{-\bn}E^j\!\!\!\!\!\!&&=\mbox{$\sum\limits_{i=0}^{j}$}(-1)^i\dbinom{j}{i}E^{j-i}(ad E)^i(f_{-\bn})\\
&&=E^jf_{-\bn}-jE^{j-1}(q^{-n_1n_2}d)+\dbinom{j}{2}E^{j-2}(-2q^{-n_1n_2}E)\\
&&=E^jf_{-\bn}-jq^{-n_1n_2}E^{j-1}d-2q^{-n_1n_2}\dbinom{j}{2}E^{j-1}.
\end{eqnarray*}
Furthermore, one can obtain
\begin{eqnarray*}
&&(ad E)^i(g_{\bm})=(ad e_{\bn})^i(g_{\bm})=\begin{cases}
g_{\bm},&i=0,\\
-q^{m_2n_1
}e_{\bm+\bn},&i=1,\\
0,&i\geq2.
\end{cases}\\
&&(ad E)^i(h_{\bm})=(ad e_{\bn})^i(h_{\bm})=\begin{cases}
h_{\bm},&i=0,\\
q^{m_1n_2}e_{\bm+\bn},&i=1,\\
0,&i\geq2.
\end{cases}\\
\end{eqnarray*}
These mean
\begin{eqnarray*}
&&g_{\bm}E^j=\mbox{$\sum\limits_{i=0}^{j}$}(-1)^i\dbinom{j}{i}E^{j-i}(ad E)^i(g_{\bm})=E^jg_{\bm}+jq^{m_2n_1}E^{j-1}e_{\bm+\bn},\\
&&h_{\bm}E^j=\mbox{$\sum\limits_{i=0}^{j}$}(-1)^i\dbinom{j}{i}E^{j-i}(ad E)^i(h_{\bm})=E^jh_{\bm}-jq^{m_1n_2}E^{j-1}e_{\bm+\bn}.\\
\end{eqnarray*}
\end{proof}

\begin{lemm}\label{lemm-4-2}
The following identities hold in $\UU(\wsl)$ (where $l_{\bm}\in\WL_{\bm}$):
\begin{eqnarray}
(l_{\bm}\otimes1)I_c\!\!\!\!\!\!\!\!&&=I_{c-r}(l_{\bm}\otimes1),\label{Td12Ee-Il}\\
(1\otimes d_1)I_c\!\!\!\!\!\!\!\!&&=n_1I_{c+1}(T_{c}\otimes Et)+I_c(1\otimes d_1),\label{Td12Ee-Id1}\\
(1\otimes d_2)I_c\!\!\!\!\!\!\!\!&&=n_2I_{c+1}(T_{c}\otimes Et)+I_c(1\otimes d_2)\label{Td12Ee-Id2},\\
(1\otimes e_{\bm})I_c\!\!\!\!\!\!\!\!&&=I_{c}(1\otimes e_{\bm}),\label{Td12Ee-Ie}\\
(1\otimes f_{\bm})I_c\!\!\!\!\!\!\!\!&&=\begin{cases}
q^{m_2n_1}I_{c+1}(T_c\otimes h_{\bm+\bn}t)-q^{m_1n_2}I_{c+1}(T_c\otimes g_{\bm+\bn}t)\\
+I_c(1\otimes f_{\bm})-s_{\bm}I_{c+2}(T_c^{<2>}\otimes e_{\bm+2\bn}t^2),&\bm+\bn\neq0,\\ \\
I_c(1\otimes f_{-\bn})-q^{-n_2n_1}I_{c+1}(T_c\otimes dt)\\
-q^{-n_1n_2}I_{c+2}(T_c^{<2>}\otimes Et^2),&\bm+\bn=0,
\end{cases}\label{Td12Ee-If}\\
(1\otimes g_{\bm})I_c\!\!\!\!\!\!\!\!&&=I_c(1\otimes g_{\bm})+q^{m_2n_1}I_{c+1}(T_c\otimes e_{\bm+\bn}t),\label{Td12Ee-Ig}\\
(1\otimes h_{\bm})I_c\!\!\!\!\!\!\!\!&&=I_c(1\otimes h_{\bm})-q^{m_1n_2}I_{c+1}(T_c\otimes e_{\bm+\bn}t),\label{Td12Ee-Ih}\\
(1\otimes d)I_c\!\!\!\!\!\!\!\!&&=I_c(1\otimes d)+2I_{c+1}(T_c\otimes Et
).\label{Td12Eg-Id}
\end{eqnarray}
\end{lemm}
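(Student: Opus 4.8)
The plan is to mimic the proof of Lemma \ref{lemm-3-2} almost verbatim, the only change being that the rules for moving a basis element through powers of $E=e_{\bn}$ are now those recorded in Lemma \ref{lemm-4-1} rather than the ones used in Section 3. In every case I would start from the definition $I_c=\sum_{i\ge0}\frac1{i!}T_c^{<i>}\otimes E^it^i$, push the right-hand tensor factor through each $E^i$ using the appropriate identity among \eqref{Td12Ee-deE}--\eqref{Td12Ee-hE}, then reindex the resulting double sum and collapse it back into shifted copies $I_{c+k}$ by means of the splitting rule $T_c^{<i+k>}=T_c^{<k>}T_{c+k}^{<i>}$ supplied by \eqref{x<>[]}.

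The easy identities come first. Equation \eqref{Td12Ee-Il} is immediate from the carried-over relation $l_{\bm}T_c^{<i>}=T_{c-r}^{<i>}l_{\bm}$ of \eqref{Td12Eg-Tl}, and \eqref{Td12Ee-Ie} is immediate from $e_{\bm}E^j=E^je_{\bm}$ in \eqref{Td12Ee-deE}. The two degree-derivation identities \eqref{Td12Ee-Id1}, \eqref{Td12Ee-Id2} reproduce word-for-word the computation already carried out for Lemma \ref{lemm-3-2}, since the relation $d_iE^j=E^jd_i+jn_iE^j$ of \eqref{Td12Ee-d1d2E} coincides with the one used there; after the shift $i\mapsto i+1$, the factor $\frac{j}{j!}=\frac1{(j-1)!}$ together with $T_c^{<i+1>}=T_c\,T_{c+1}^{<i>}$ produces the single correction term $n_iI_{c+1}(T_c\otimes Et)$. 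The identities \eqref{Td12Ee-Ig}, \eqref{Td12Ee-Ih} and \eqref{Td12Eg-Id} follow exactly the same one-step pattern: each of $g_{\bm}E^j$, $h_{\bm}E^j$, $dE^j$ in \eqref{Td12Ee-gE}, \eqref{Td12Ee-hE}, \eqref{Td12Ee-deE} consists of precisely two terms, so the reindexing yields $I_c(1\otimes l_{\bm})$ plus one term of the stated shape $(\mathrm{const})\,I_{c+1}(T_c\otimes e_{\bm+\bn}t)$.

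The substantive case is \eqref{Td12Ee-If}, where I expect the only real bookkeeping. For $\bm+\bn\neq0$ the expansion \eqref{Td12Ee-fE} of $f_{\bm}E^j$ has three nontrivial contributions, with powers $E^{j-1}$, $E^{j-2}$ and coefficients proportional to $j$ and $\binom{j}{2}$. Substituting into $\sum_j\frac1{j!}T_c^{<j>}\otimes f_{\bm}E^jt^j$, the identity $\frac{j}{j!}=\frac1{(j-1)!}$ converts the $h_{\bm+\bn}$- and $g_{\bm+\bn}$-terms into $I_{c+1}$-series under $i=j-1$ with $T_c^{<i+1>}=T_c\,T_{c+1}^{<i>}$, while $\frac1{j!}\binom{j}{2}=\frac1{2(j-2)!}$ combined with the coefficient $-2s_{\bm}$ converts the $e_{\bm+2\bn}$-term into the $I_{c+2}$-series under $i=j-2$ with $T_c^{<i+2>}=T_c^{<2>}T_{c+2}^{<i>}$; the surviving $E^jf_{\bm}$ term reassembles $I_c(1\otimes f_{\bm})$. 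This reproduces the four-term right-hand side exactly. The branch $\bm+\bn=0$ is identical in form, using the second alternative of \eqref{Td12Ee-fE} with $h_{\bm+\bn},g_{\bm+\bn}$ replaced by $d$ and $e_{\bm+2\bn}$ replaced by $E$. The one point demanding care throughout is keeping the $T$-shifts consistent, i.e.\ matching each surviving power $E^{j-k}$ with exactly the factor $T_c^{<k>}$ peeled off from $T_c^{<j>}$ via \eqref{x<>[]}; once that accounting is handled both branches drop out directly.
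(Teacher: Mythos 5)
Your proposal is correct and follows essentially the same route as the paper: the paper likewise carries over the $T$-only identities \eqref{Td12Eg-Tl}, \eqref{Td12Eg-Il} and the $d_i$ computations from Lemma \ref{lemm-3-2}, and proves \eqref{Td12Ee-If}--\eqref{Td12Eg-Id} by expanding $I_c$, pushing the right factor through $E^i$ via Lemma \ref{lemm-4-1}, and collapsing the reindexed sums with $T_c^{<i+k>}=T_c^{<k>}T_{c+k}^{<i>}$ from \eqref{x<>[]}. Your coefficient bookkeeping ($\frac{j}{j!}=\frac{1}{(j-1)!}$ and $\frac{1}{j!}\binom{j}{2}=\frac{1}{2(j-2)!}$ against the factor $-2s_{\bm}$) matches the paper's computation exactly, including the $\bm+\bn=0$ branch where the $\binom{j}{2}$-term leaves $E^{j-1}$ but still assembles into $I_{c+2}(T_c^{<2>}\otimes Et^2)$.
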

\begin{proof}
Equations \eqref{Td12Ee-Il}, \eqref{Td12Ee-Id1} and \eqref{Td12Ee-Id2} are similar as \eqref{Td12Eg-Il}, \eqref{Td12Eg-Id1} and \eqref{Td12Eg-Id2} in Lemma \ref{lemm-3-2}. It is easy to obtain equation \eqref{Td12Ee-Ie} by $[e_{\bm},E]=0$.

Using formula \eqref{x<>[]}, \eqref{Td12Ee-fE}, we obtain equation \eqref{Td12Ee-If} as follows, for $\bm+\bn\neq0$,
\begin{eqnarray*}
(1\otimes f_{\bm})I_c\!\!\!\!\!\!\!\!\!&&=\mbox{$\sum\limits_{i=0}^{\infty}$}\frac{1}{i!}T_c^{<i>}\otimes f_{\bm}E^it^i=\mbox{$\sum\limits_{i=0}^{\infty}$}\frac{1}{i!}T_c^{<i>}\otimes\\
&&\ \ \ \ \big(E^if_{\bm}\!-iq^{n_2m_1}E^{i-1}g_{\bm+\bn}
\!+iq^{n_1m_2}E^{i-1}h_{\bm+\bn} \!-2s_{\bm}\dbinom{i}{2}E^{i-2}e_{\bm+2\bn}\big)t^i\\
&&=\mbox{$\sum\limits_{i=0}^{\infty}$}\frac{1}{i!}T_c^{<i>}\otimes E^if_{\bm}t^i-q^{n_2m_1}\mbox{$\sum\limits_{i=0}^{\infty}$}\frac{1}{i!}T_c^{<i+1>}\otimes E^ig_{\bm+\bn}t^{i+1}\\
&&\ \ \ \ +q^{n_1m_2}\mbox{$\sum\limits_{i=0}^{\infty}$}\frac{1}{i!}T_c^{<i+1>}\otimes E^ih_{\bm+\bn}t^{i+1}-s_{\bm}
\mbox{$\sum\limits_{i=0}^{\infty}$}\frac{1}{i!}T_c^{<i+2>}\otimes E^ie_{\bm+2\bn}t^{i+2}\\
&&=I_c(1\otimes f_{\bm})-q^{n_2m_1}I_{c+1}(T_c\otimes g_{\bm+\bn}t)+q^{n_1m_2}I_{c+1}(T_c\otimes h_{\bm+\bn}t)\\
&&\ \ \ \ -s_{\bm}
I_{c+2}(T_c^{<2>}\otimes e_{\bm+2\bn}t^2).
\end{eqnarray*}
Similarly
\begin{eqnarray*}
(1\otimes f_{-\bn})I_c\!\!\!\!\!\!\!\!\!&&=\mbox{$\sum\limits_{i=0}^{\infty}$}\frac{1}{i!}T_c^{<i>}\otimes f_{-\bn}E^it^i\\
&&=\mbox{$\sum\limits_{i=0}^{\infty}$}\frac{1}{i!}T_c^{<i>}\otimes\big(E^if_{-\bn}-iq^{-n_2n_1}E^{i-1}d
-2q^{-n_2n_1}\dbinom{i}{2}E^{i-1}\big)t^i\\
&&=\mbox{$\sum\limits_{i=0}^{\infty}$}\frac{1}{i!}T_c^{<i>}\otimes E^if_{-\bn}t^i-q^{-n_2n_1}\mbox{$\sum\limits_{i=0}^{\infty}$}\frac{1}{i!}T_c^{<i+1>}\otimes E^idt^{i+1}\\
&&\ \ \ \ -q^{-n_1n_2}\mbox{$\sum\limits_{i=0}^{\infty}$}\frac{1}{i!}T_c^{<i+2>}\otimes E^{i+1}t^{i+2}\\
&&=I_c(1\otimes f_{-\bn})-q^{-n_2n_1}I_{c+1}(T_c\otimes dt)-q^{-n_1n_2}I_{c+2}(T_c^{<2>}\otimes Et^2).
\end{eqnarray*}
Moreover, we also have
\begin{eqnarray*}
(1\otimes g_{\bm})I_c\!\!\!\!\!\!&&=\mbox{$\sum\limits_{i=0}^{\infty}$}\frac{1}{i!}T_c^{<i>}\otimes g_{\bm}E^it^i=\mbox{$\sum\limits_{i=0}^{\infty}$}\frac{1}{i!}T_c^{<i>}\otimes (E^ig_{\bm}+iq^{m_2n_1}E^{i-1}e_{\bm+\bn})t^i\\
&&=\mbox{$\sum\limits_{i=0}^{\infty}$}\frac{1}{i!}T_c^{<i>}\otimes E^ig_{\bm}t^i+q^{m_2n_1}\mbox{$\sum\limits_{i=0}^{\infty}$}\frac{1}{i!}T_c^{<i+1>}\otimes E^ie_{\bm+\bn}t^{i+1}\\
&&=I_c(1\otimes g_{\bm})+q^{m_2n_1}I_{c+1}(T_c\otimes e_{\bm+\bn}t),\\
(1\otimes h_{\bm})I_c\!\!\!\!\!\!\!\!\!&&=\mbox{$\sum\limits_{i=0}^{\infty}$}\frac{1}{i!}T_c^{<i>}\otimes h_{\bm}E^it^i=\mbox{$\sum\limits_{i=0}^{\infty}$}\frac{1}{i!}T_c^{<i>}\otimes (E^ih_{\bm}-iq^{m_1n_2}E^{i-1}e_{\bm+\bn})t^i\\
&&=\mbox{$\sum\limits_{i=0}^{\infty}$}\frac{1}{i!}T_c^{<i>}\otimes E^ih_{\bm}t^i-q^{m_1n_2}\mbox{$\sum\limits_{i=0}^{\infty}$}\frac{1}{i!}T_c^{<i+1>}\otimes E^ie_{\bm+\bn}t^{i+1}\\
&&=I_c(1\otimes h_{\bm})-q^{m_1n_2}I_{c+1}(T_c\otimes e_{\bm+\bn}t),\\
(1\otimes d)I_c\!\!\!\!\!\!\!\!\!&&=\mbox{$\sum\limits_{i=0}^{\infty}$}\frac{1}{i!}T_c^{<i>}\otimes dE^it^i=\mbox{$\sum\limits_{i=0}^{\infty}$}\frac{1}{i!}T_c^{<i>}\otimes (E^id+2iE^i)t^i\\
&&=I_c(1\otimes d)+2I_{c+1}(T_c\otimes Et).
\end{eqnarray*}\end{proof}

\begin{lemm}\label{lemm-4-3}
The following identities hold in $\UU(\wsl)$:
\begin{eqnarray}
&&e_{\bm}J_c=J_{c+r}e_{\bm},\label{Td12Ee-ej}\\
&&d_1J_c=J_cd_1-n_1J_cT_{-c}Et,\label{Td12Ee-d1j}\\
&&d_2J_c=J_cd_2-n_2J_cT_{-c}Et,\label{Td12Ee-d2j}\\
&&f_{\bm}J_c=\begin{cases}
q^{m_1n_2}J_{c+r}(T_{-c-r}g_{\bm+\bn}t)-q^{m_2n_1}J_{c+r}(h_{\bm+\bn}T_{1-c}t)\\
+J_{c+r}f_{\bm}-s_{\bm}J_{c+r}(e_{\bm+2\bn}T_{1-c}^{<2>}t^2),&\bm+\bn\neq0,\\ \\
J_{c-1}f_{-\bn}+q^{n_1n_2}J_{c-1}(dT_{1-c}t)-q^{n_1n_2}J_{c-1}ET_{1-c}^{<2>}t^2,&\bm+\bn=0,
\end{cases}\label{Td12Ee-fj}\\
&&g_{\bm}J_c=J_{c+r}g_{\bm}-q^{n_1m_2}J_{c+r}(e_{\bm+\bn}T_{1-c}t),\label{Td12Ee-gj}\\
&&h_{\bm}J_c=J_{c+r}h_{\bm}+q^{n_2m_1}J_{c+r}(e_{\bm+\bn}T_{1-c}t),\label{Td12Ee-hj}\\
&&dJ_c=J_cd-2J_c(ET_{1-c}t).\label{Td12Ee-dj}
\end{eqnarray}
\begin{proof}
Using the formulas \eqref{equa-J}, \eqref{Td12Eg-Tl} and \eqref{Td12Ee-deE}, there is
\begin{eqnarray*}
e_{\bm}J_c\!\!\!\!\!\!\!\!\!&&=\mbox{$\sum\limits_{i=0}^{\infty}$}\frac{(-1)^i}{i!}e_{\bm}T_{-c}^{[i]}E^it^i
=\mbox{$\sum\limits_{i=0}^{\infty}$}\frac{(-1)^i}{i!}T_{-c-r}^{[i]}e_{\bm}E^it^i\\
&&=\mbox{$\sum\limits_{i=0}^{\infty}$}\frac{(-1)^i}{i!}T_{-c-r}^{[i]}E^ie_{\bm}t^i=J_{c+r}e_{\bm}.
\end{eqnarray*}

Formulas \eqref{Td12Ee-d1j} and \eqref{Td12Ee-d2j} are the same as those presented in Lemma \ref{lemm-3-3}. For $\bm\neq-\bn$, there is
\begin{eqnarray*}
f_{\bm}J_c\!\!\!\!\!\!\!&&=\mbox{$\sum\limits_{i=0}^{\infty}$}\frac{(-1)^i}{i!}f_{\bm}T_{-c}^{[i]}E^it^i
=\mbox{$\sum\limits_{i=0}^{\infty}$}\frac{(-1)^i}{i!}T_{-c-r}^{[i]}f_{\bm}E^it^i\\
&&=\mbox{$\sum\limits_{i=0}^{\infty}$}\frac{(-1)^i}{i!}T_{-c-r}^{[i]}(E^if_{\bm}\!\!-iq^{m_1n_2}E^{i-1}g_{\bm+\bn}
\!\!+iq^{m_2n_1}E^{i-1}h_{\bm+\bn}\!\!-2s_{\bm}\dbinom{i}{2}E^{i-2}e_{\bm+2\bn})t^i\\
&&=\mbox{$\sum\limits_{i=0}^{\infty}$}\frac{(-1)^i}{i!}T_{-c-r}^{[i]}E^if_{\bm}t^i-q^{m_1n_2}\mbox{$\sum\limits_{i=0}^{\infty}$}\frac{(-1)^{i+1}}{i!}T_{-c-r}^{[i+1]}E^{i}g_{\bm+\bn}t^{i+1}\\
&&\ \ \ \ +q^{m_2n_1}\mbox{$\sum\limits_{i=0}^{\infty}$}\frac{(-1)^{i+1}}{i!}T_{-c-r}^{[i+1]}E^{i}h_{\bm+\bn}t^{i+1}-s_{\bm}\mbox{$\sum\limits_{i=0}^{\infty}$}\frac{(-1)^{i+2}}{i!}T_{-c-r}^{[i+2]}E^{i}e_{\bm+2\bn})t^{i+2}\\
&&=J_{c+r}f_{\bm}+q^{m_1n_2}J_{c+r}(g_{\bm+\bn}T_{1-c}t)-q^{m_2n_1}J_{c+r}(h_{\bm+\bn}T_{1-c}t)-s_{\bm}J_{c+r}(e_{\bm+2\bn}T_{1-c}^{<2>}t^2).
\end{eqnarray*}

Furthermore, there is
\begin{eqnarray*}
f_{-\bn}J_c\!\!\!\!\!\!\!&&=\mbox{$\sum\limits_{i=0}^{\infty}$}\frac{(-1)^i}{i!}f_{-\bn}T_{-c}^{[i]}E^it^i
=\mbox{$\sum\limits_{i=0}^{\infty}$}\frac{(-1)^i}{i!}T_{-c+1}^{[i]}f_{-\bn}E^it^i\\
&&=\mbox{$\sum\limits_{i=0}^{\infty}$}\frac{(-1)^i}{i!}T_{-c+1}^{[i]}(E^if_{-\bn}-iq^{-n_2n_1}E^{i-1}d
-2q^{-n_2n_1}\dbinom{i}{2}E^{i-1})t^i\\
&&=\mbox{$\sum\limits_{i=0}^{\infty}$}\frac{(-1)^i}{i!}T_{-c+1}^{[i]}E^if_{-\bn}t^i
-q^{-n_2n_1}\mbox{$\sum\limits_{i=0}^{\infty}$}\frac{(-1)^{i+1}}{i!}T_{-c+1}^{[i+1]}E^{i}dt^{i+1}\\
&&\ \ \ \ -q^{-n_1n_2}\mbox{$\sum\limits_{i=0}^{\infty}$}\frac{(-1)^{i+2}}{i!}T_{-c+1}^{[i+2]}E^{i+1}t^{i+2}\\
&&=J_{c-1}f_{-\bn}+q^{-n_2n_1}J_{c-1}T_{1-c}dt-q^{-n_1n_2}J_{c-1}T_{1-c}^{[2]}Et^2\\
&&=J_{c-1}f_{-\bn}+q^{-n_1n_2}J_{c-1}dT_{1-c}t-q^{-n_1n_2}J_{c-1}ET_{1-c}^{<2>}t^2.
\end{eqnarray*}
Thus, we obtain \eqref{Td12Ee-fj}. Equations \eqref{Td12Ee-gj}  to \eqref{Td12Ee-dj} could be obtained by the formulas \eqref{x<>[]}, \eqref{equa-J} and formulas \eqref{Td12Ee-deE}, \eqref{Td12Ee-gE} and \eqref{Td12Ee-hE},
\begin{eqnarray*}
g_{\bm}J_c\!\!\!\!\!\!\!&&=\mbox{$\sum\limits_{i=0}^{\infty}$}\frac{(-1)^i}{i!}g_{\bm}T_{-c}^{[i]}E^it^i
=\mbox{$\sum\limits_{i=0}^{\infty}$}\frac{(-1)^i}{i!}T_{-c-r}^{[i]}g_{\bm}E^it^i\\
&&=\mbox{$\sum\limits_{i=0}^{\infty}$}\frac{(-1)^i}{i!}T_{-c-r}^{[i]}(E^ig_{\bm}+iq^{m_2n_1}E^{i-1}e_{\bm+\bn})t^i\\
&&=\mbox{$\sum\limits_{i=0}^{\infty}$}\frac{(-1)^i}{i!}T_{-c-r}^{[i]}E^ig_{\bm}t^i
+q^{m_2n_1}\mbox{$\sum\limits_{i=0}^{\infty}$}\frac{(-1)^{i+1}}{i!}T_{-c-r}^{[i+1]}E^ie_{\bm+\bn}t^{i+1}\\
&&=J_{c+r}g_{\bm}-q^{m_2n_1}J_{c+r}e_{\bm+\bn}T_{1-c}t,\\
h_{\bm}J_c\!\!\!\!\!\!\!&&=\mbox{$\sum\limits_{i=0}^{\infty}$}\frac{(-1)^i}{i!}h_{\bm}T_{-c}^{[i]}E^it^i
=\mbox{$\sum\limits_{i=0}^{\infty}$}\frac{(-1)^i}{i!}T_{-c-r}^{[i]}h_{\bm}E^it^i\\
&&=\mbox{$\sum\limits_{i=0}^{\infty}$}\frac{(-1)^i}{i!}T_{-c-r}^{[i]}(E^ih_{\bm}-iq^{m_1n_2}E^{i-1}e_{\bm+\bn})t^i\\
&&=\mbox{$\sum\limits_{i=0}^{\infty}$}\frac{(-1)^i}{i!}T_{-c-r}^{[i]}E^ih_{\bm}t^i
-q^{m_1n_2}\mbox{$\sum\limits_{i=0}^{\infty}$}\frac{(-1)^{i+1}}{i!}T_{-c-r}^{[i+1]}E^ie_{\bm+\bn}t^{i+1}\\
&&=J_{c+r}h_{\bm}+q^{m_1n_2}J_{c+r}e_{\bm+\bn}T_{1-c}t,\\
dJ_c\!\!\!\!\!\!\!&&=\mbox{$\sum\limits_{i=0}^{\infty}$}\frac{(-1)^i}{i!}dT_{-c}^{[i]}E^it^i
=\mbox{$\sum\limits_{i=0}^{\infty}$}\frac{(-1)^i}{i!}T_{-c}^{[i]}dE^it^i
=\mbox{$\sum\limits_{i=0}^{\infty}$}\frac{(-1)^i}{i!}T_{-c}^{[i]}(E^id+2iE^i)t^i\\
&&=\mbox{$\sum\limits_{i=0}^{\infty}$}\frac{(-1)^i}{i!}T_{-c}^{[i]}E^idt^i
+2\mbox{$\sum\limits_{i=0}^{\infty}$}\frac{(-1)^{i+1}}{i!}T_{-c}^{[i+1]}E^{i+1}t^{i+1}
=J_cd-2J_cET_{1-c}t.
\end{eqnarray*}
\end{proof}
\end{lemm}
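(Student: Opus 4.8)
The plan is to establish each identity by the same device already used in the proof of Lemma~\ref{lemm-3-3}: expand $J_c=\sum_{i=0}^{\infty}\frac{(-1)^i}{i!}T_{-c}^{[i]}E^it^i$ (now with $E=e_{\bn}$), move the given element to the left first across the Cartan factor $T_{-c}^{[i]}$ and then across the power $E^i$, and finally repackage the resulting sum as a $J$-factor times a correction term. The only formal calculus required throughout is $T_a^{[r+s]}=T_a^{[r]}T_{a-r}^{[s]}$ and $T_{-c+j}^{[j]}=T_{1-c}^{<j>}$ from \eqref{x<>[]}, the weight-shift $T_a^{[j]}l_{\bm}=l_{\bm}T_{a+r}^{[j]}$ obtained from \eqref{Td12Eg-Tl}, and the relation $T_a^{[j]}E^i=E^iT_{a+i}^{[j]}$ forced by $[T,E]=E$; together these convert every leftover Cartan factor into the advertised $T_{1-c}$- or $T_{1-c}^{<2>}$-shape and let one recognise $\sum_i\frac{(-1)^i}{i!}T_{-c-r}^{[i]}E^it^i=J_{c+r}$.

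First I would dispose of the short cases. Since $[e_{\bm},E]=0$ and $e_{\bm}$ is a weight vector, \eqref{Td12Eg-Tl} and \eqref{Td12Ee-deE} give $e_{\bm}J_c=J_{c+r}e_{\bm}$ immediately. For $d$ one has weight $0$ but $[d,E]=2E$, so $dE^i=E^id+2iE^i$; substituting and reindexing $i\mapsto i+1$ produces the single correction $-2J_c(ET_{1-c}t)$. The elements $d_1,d_2$ are not weight vectors, yet $[d_i,E^i]=in_iE^i$ yields $d_iE^i=E^id_i+in_iE^i$, and the shift-by-one sum collapses exactly as in Lemma~\ref{lemm-3-3} to $-n_iJ_cT_{-c}Et$. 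For $g_{\bm}$ and $h_{\bm}$ the commutators \eqref{Td12Ee-gE}, \eqref{Td12Ee-hE} each contribute one extra term proportional to $e_{\bm+\bn}$ at order $i-1$; after passing the element through $T_{-c}^{[i]}$ and reindexing $i\mapsto i+1$ (which introduces the sign-flipping factor $(-1)^{i+1}$) one obtains $J_{c+r}g_{\bm}$ (resp.\ $J_{c+r}h_{\bm}$) plus a single correction proportional to $J_{c+r}(e_{\bm+\bn}T_{1-c}t)$.

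The main obstacle is the $f_{\bm}$ identity, which splits according to whether $\bm+\bn=\bz$. When $\bm+\bn\neq\bz$, the commutator \eqref{Td12Ee-fE} carries three correction terms --- a first-order pair producing $g_{\bm+\bn}$ and $h_{\bm+\bn}$ and a second-order term producing $e_{\bm+2\bn}$ --- so I would split $f_{\bm}J_c$ into four sums, reindex them separately ($i\mapsto i+1$ for the $g,h$ terms, $i\mapsto i+2$ for the $e$ term), and collect the residual Cartan factors as $T_{1-c}$ and $T_{1-c}^{<2>}$; tracking the signs $(-1)^{i+1},(-1)^{i+2}$ and the coefficient $s_{\bm}$ is where slips are most likely. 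When $\bm+\bn=\bz$ (so $\bm=-\bn$ and $r=x_1m_1+x_2m_2=-1$), the correction terms become $d$ and $E$ rather than $g,h,e$, the general shift $c\mapsto c+r$ specialises to $c\mapsto c-1$, and one must use $f_{-\bn}T_{-c}^{[i]}=T_{-c+1}^{[i]}f_{-\bn}$; carrying $r=-1$ correctly through the index of $J_{c-1}$ is the delicate point. In every subcase the computation finishes by recognising the surviving sum as the appropriate $J$-factor and reading off the stated formula.
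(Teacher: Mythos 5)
Your proposal is correct and follows essentially the same route as the paper's own proof: expand $J_c=\sum_i\frac{(-1)^i}{i!}T_{-c}^{[i]}E^it^i$, pass the element left across $T_{-c}^{[i]}$ via the weight-shift \eqref{Td12Eg-Tl}, substitute the $E^i$-commutation formulas of Lemma \ref{lemm-4-1}, reindex ($i\mapsto i+1$ or $i+2$), and reassemble via \eqref{x<>[]} into $J_{c+r}$ (respectively $J_{c-1}$ when $\bm=-\bn$, where $r=-1$) times the stated corrections. All the key points you flag — the four-sum split for $f_{\bm}$, the sign factors $(-1)^{i+1},(-1)^{i+2}$, and the specialization $r=-1$ in the $\bm+\bn=\bz$ case — are exactly how the paper carries out the computation.
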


\begin{proof}[Proof of Theorem \ref{main}\,(2)]
Using equations \eqref{equa-D-D0}, \eqref{equa-II-JJ} and the lemmas from Lemma \ref{lemm-4-1} to Lemma \ref{lemm-4-3}, for $\bm\in\BZ$, there are,
\begin{eqnarray*}
&\D(e_{\bm})\!\!\!\!&=\II \D_0(e_{\bm}) \II^{-1}=\II(e_{\bm}\otimes1+1\otimes e_{\bm})I=\II I_{-r}(e_{\bm}\otimes1)+\II I(1\otimes e_{\bm})\\
&&=e_{\bm}\otimes(1-Et)^r+1\otimes e_{\bm}.\\
&\D(g_{\bm})\!\!\!\!&=\II \D_0(g_{\bm}) \II^{-1}=\II(g_{\bm}\otimes1+1\otimes g_{\bm})I\\
&&=\II I_{-r}(g_{\bm}\otimes1)+\II(I(1\otimes g_{\bm})+q^{m_2n_1}I_{1}(T\otimes e_{\bm+\bn}t))\\
&&=g_{\bm}\otimes(1-Et)^r+
1\otimes g_{\bm}+q^{m_2n_1}T\otimes (1-Et)^{-1}e_{\bm+\bn}t,\\
&\D(h_{\bm})\!\!\!\!&=\II \D_0(h_{\bm}) \II^{-1}=\II(h_{\bm}\otimes1+1\otimes h_{\bm})I\\
&&=\II I_{-r}(h_{\bm}\otimes1)+\II(I(1\otimes h_{\bm})-q^{m_1n_2}I_{1}(T\otimes e_{\bm+\bn}t))\\
&&=h_{\bm}\otimes(1-Et)^r+
1\otimes h_{\bm}-q^{m_1n_2}T\otimes (1-Et)^{-1}e_{\bm+\bn}t,\\
&\D(d)\!\!\!\!&=\II \D_0(d) \II^{-1}=\II(d\otimes1+1\otimes d)I\\
&&=\II I(d\otimes1)+\II(I(1\otimes d)+2I_{1}(T\otimes Et
))\\
&&=d\otimes1+1\otimes d+2T\otimes (1-Et)^{-1}Et,\\
&\D(d_1)\!\!\!\!&=\II \D_0(d_1) \II^{-1}=\II(d_1\otimes1+1\otimes d_1)I\\
&&=\II I(d_1\otimes1)+\II (n_1I_{1}(T\otimes Et)+I(1\otimes d_1))\\
&&=d_1\otimes1+1\otimes d_1-n_1T\otimes1+n_1T\otimes(1-Et)^{-1},\\
&\D(d_2)\!\!\!\!&=\II \D_0(d_2) \II^{-1}=\II(d_2\otimes1+1\otimes d_2)I\\
&&=\II I(d_2\otimes1)+\II (n_2I_{1}(T\otimes Et)+I(1\otimes d_2))\\
&&=d_2\otimes1+1\otimes d_2-n_2T\otimes1+n_2T\otimes(1-Et)^{-1},\\
&S(e_{\bm})\!\!\!\!&=\JJ s_0(e_{\bm}) J=-\JJ e_{\bm}J=-\JJ J_r(e_{\bm})=-(1-Et)^{r}e_{\bm},\\
&S(g_{\bm})\!\!\!\!&=\JJ s_0(g_{\bm}) J=-\JJ (g_{\bm})J=-\JJ (J_{r}g_{\bm}-q^{m_2n_1}J_{r}(e_{\bm+\bn}T_{1}t))\\
&&=-(1-Et)^{r}g_{\bm}+q^{m_2n_1}(1-Et)^{r}e_{\bm+\bn}T_1t,\\
&S(h_{\bm})\!\!\!\!&=\JJ s_0(h_{\bm}) J=-\JJ (h_{\bm})J=-\JJ (J_{r}h_{\bm}+q^{m_1n_2}J_{r}(e_{\bm+\bn}T_{1}t))\\
&&=-(1-Et)^{r}h_{\bm}-q^{m_1n_2}(1-Et)^{r}e_{\bm+\bn}T_{1}t,\\
&S(d)\!\!\!\!&=\JJ s_0(d) J=-\JJ d J=-\JJ (Jd-2J(ET_{1}t))=-d+2ET_1t,\\
&S(d_1)\!\!\!\!&=\JJ s_0(d_1) J=-\JJ d_1 J=-\JJ (Jd_1-n_1JTEt)=-d_1+n_1TEt,\\
&S(d_2)\!\!\!\!&=\JJ s_0(d_2) J=-\JJ d_2 J=-\JJ (Jd_2-n_2JTEt)=-d_2+n_2TEt.
\end{eqnarray*}
Moreover, the following equations are also necessary to the theorem, where $\bm+\bn\neq0$,
\begin{eqnarray*}
&\D(f_{\bm})\!\!\!\!&=\II \D_0(f_{\bm}) \II^{-1}=\II(f_{\bm}\otimes1+1\otimes f_{\bm})I\\
&&=\II I_{-r}(f_{\bm}\otimes1)+\II I(1\otimes f_{\bm})-q^{m_1n_2}\II I_{1}(T\otimes g_{\bm+\bn}t)\\
&&\ \ \ \ +q^{m_2n_1}\II I_{1}(T\otimes h_{\bm+\bn}t)-s_{\bm}
\II I_{2}(T^{<2>}\otimes e_{\bm+2\bn}t^2)\\
&&=f_{\bm}\otimes(1-Et)^r+
1\otimes f_{\bm}-q^{m_1n_2}T\otimes (1-Et)^{-1}g_{\bm+\bn}t\\
&&\ \ \ \ +q^{m_2n_1}T\otimes (1-Et)^{-1}h_{\bm+\bn}t-s_{\bm}T^{<2>}\otimes (1-Et)^{-2}e_{\bm+2\bn}t^2,\\
&\D(f_{-\bn})\!\!\!\!&=\II \D_0(f_{-\bn}) \II^{-1}=\II(f_{-\bn}\otimes1+1\otimes f_{-\bn})I\\
&&=\II I_{1}(f_{-\bn}\otimes1)+\II(I(1\otimes f_{-\bn})-q^{-n_2n_1}I_{1}(T\otimes dt)-q^{-n_1n_2}I_{2}(T^{<2>}\otimes Et^2))\\
&&=f_{-\bn}\otimes(1-Et)^{-1}+
1\otimes f_{-\bn}-q^{-n_2n_1}T\otimes (1-Et)^{-1}dt\\
&&\ \ \ \ -q^{-n_1n_2}T^{<2>}\otimes (1-Et)^{-2}Et^2,\\
&S(f_{\bm})\!\!\!\!&=\JJ s_0(f_{\bm}) J=-\JJ f_{\bm}J\\
&&=\JJ J_{r}(q^{m_2n_1}h_{\bm+\bn}T_{1}t-q^{m_1n_2}g_{\bm+\bn}T_{1}t
-f_{\bm}+s_{\bm}e_{\bm+2\bn}T_{1}^{<2>}t^2)\\
&&=q^{m_2n_1}(1-Et)^{r}h_{\bm+\bn}T_{1}t
-q^{m_1n_2}(1-Et)^{r}g_{\bm+\bn}T_{1}t\\
&&\ \ \ \ -(1-Et)^{r}f_{\bm}+s_{\bm}(1-Et)^{r}e_{\bm+2\bn}T_{1}^{<2>}t^2,\\
&S(f_{-\bn})\!\!\!\!&=\JJ s_0(f_{-\bn}) J=-\JJ f_{-\bn}J\\
&&=-\JJ(J_{-1}f_{-\bn}+q^{-n_1n_2}J_{-1}(dT_{1}t)-q^{-n_1n_2}J_{-1}ET_{1}^{<2>}t^2)\\
&&=-(1-Et)^{-1}f_{-\bn}-q^{-n_1n_2}(1-Et)^{-1}dT_{1}t+q^{-n_1n_2}(1-Et)^{-1}ET_1^{<2>}t^2.
\end{eqnarray*}
\end{proof}

\vskip12pt

\noindent{\bf5. Proof of Theorem\ref{main}\,(3)}\setcounter{section}{5}\setcounter{theo}{0}\setcounter{equation}{0}

\vskip6pt

In this section, we take $T=\frac{1}{2}d$ and $E=e_{\bn}$ for $\bn=(n_1,n_2)\in\BZ$ such that $[T,E]=E$. The expressions only referring to $E$ in Section 4 are also tenable in this section, such as expressions from \eqref{Td12Ee-Id1} to \eqref{Td12Eg-Id} in Lemma \ref{lemm-4-2}.
For $\bm=(m_1,m_2)\in\BZ$, $r=x_1m_1+x_2m_2$, denote
\begin{eqnarray*}
s_{\bm}=q^{n_2m_1+n_1m_2+n_1n_2}.
\end{eqnarray*}

\begin{lemm}\label{lemm-5-1}
The following identities hold in $\UU(\wsl)$:
\begin{eqnarray}
&e_{\bm}T_c^{[i]}=T_{c-1}^{[i]}e_{\bm},\ \ &e_{\bm}T_c^{<i>}=T_{c-1}^{<i>}e_{\bm},\label{TdEe-Te}\\
&f_{\bm}T_c^{[i]}=T_{c+1}^{[i]}f_{\bm},\ \ &f_{\bm}T_c^{<i>}=T_{c+1}^{<i>}f_{\bm},\label{TdEe-Tf}\\
&g_{\bm}T_c^{[i]}=T_{c}^{[i]}g_{\bm},\ \ \ &g_{\bm}T_c^{<i>}=T_{c}^{<i>}g_{\bm},\label{TdEe-Tg}\\
&h_{\bm}T_c^{[i]}=T_{c}^{[i]}h_{\bm},\ \ &h_{\bm}T_c^{<i>}=T_{c}^{<i>}h_{\bm},\label{TdEe-Th}\\
&dT_c^{[i]}=T_{c}^{[i]}d,\ \ \ \ &dT_c^{<i>}=T_{c}^{<i>}d,\label{TdEe-Td}\\
&d_1T_c^{[i]}=T_{c}^{[i]}d_1,\ \ &d_1T_c^{<i>}=T_{c}^{<i>}d_1,\label{TdEe-Td1}\\
&d_2T_c^{[i]}=T_{c}^{[i]}d_2,\ \ &d_2T_c^{<i>}=T_{c}^{<i>}d_2.\label{TdEe-Td2}
\end{eqnarray}
\end{lemm}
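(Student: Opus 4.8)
The plan is to reduce every identity to the elementary fact that each basis element is an eigenvector of $\mathrm{ad}\,d$, together with the hypothesis $T=\frac12 d$. Since $T_c^{[i]}$ and $T_c^{<i>}$ are by definition polynomials in the single element $T=\frac12 d$, namely products of the shifted factors $T+c-p$ (resp.\ $T+c+p$), it suffices to understand how each of $e_{\bm},f_{\bm},g_{\bm},h_{\bm},d,d_1,d_2$ commutes past a single factor $T+a$ for an arbitrary scalar $a$, and then to iterate.

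First I would record the single-factor relations. From the defining brackets $[d,e_{\bm}]=2e_{\bm}$, $[d,f_{\bm}]=-2f_{\bm}$, $[d,g_{\bm}]=[d,h_{\bm}]=0$ and $[d,d]=[d,d_1]=[d,d_2]=0$, together with the substitution $T=\frac12 d$, one gets $e_{\bm}(T+a)=(T+a-1)e_{\bm}$ and $f_{\bm}(T+a)=(T+a+1)f_{\bm}$, while $g_{\bm},h_{\bm},d,d_1,d_2$ each commute with $T+a$. Equivalently, every element $l$ here is an $\mathrm{ad}\,T$-eigenvector, $[T,l]=\lambda l$, with $\lambda=1,-1,0,0,0,0,0$ for $e,f,g,h,d,d_1,d_2$ respectively, so that $l\,(T+a)=(T+a-\lambda)\,l$.

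Then I would iterate over the $i$ factors. Writing $T_c^{[i]}=(T+c)(T+c-1)\cdots(T+c-i+1)$ and pushing $l$ to the right past each factor in turn converts $T+c-p$ into $T+c-p-\lambda$, so that $l\,T_c^{[i]}=T_{c-\lambda}^{[i]}\,l$; the argument for $T_c^{<i>}=(T+c)(T+c+1)\cdots(T+c+i-1)$ is identical. Specializing $\lambda$ then yields the seven displayed pairs: the resulting subscript is $c-1$ for $e_{\bm}$ (giving \eqref{TdEe-Te}), $c+1$ for $f_{\bm}$ (giving \eqref{TdEe-Tf}), and $c$ for the remaining five elements \eqref{TdEe-Tg}--\eqref{TdEe-Td2}.

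This lemma is routine, and I expect no genuine obstacle; the only point requiring care is the direction of the shift, which is opposite for $e_{\bm}$ and $f_{\bm}$ and is governed by the sign of the $\mathrm{ad}\,d$-eigenvalue. It is worth emphasizing what distinguishes this section from Sections 3 and 4: there $T=x_1d_1+x_2d_2$ lay in the degree part and the shift $r=x_1m_1+x_2m_2$ was the $\BZ$-degree of $l_{\bm}$ (cf.\ \eqref{Td12Eg-Tl}), whereas here $T=\frac12 d$ lies in the $\mathfrak{sl}_2$-part and the relevant shift is instead the $\mathrm{ad}\,d$-weight, which takes only the values $\pm1$ and $0$ and is insensitive to $\bm$. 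This is precisely why $g_{\bm},h_{\bm}$ (and $d,d_1,d_2$) now commute with $T_c^{[i]}$ and $T_c^{<i>}$ outright, a feature absent in the earlier two cases.
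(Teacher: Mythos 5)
Your proof is correct and is essentially the paper's own argument: the paper likewise reduces everything to the brackets $[T,e_{\bm}]=e_{\bm}$, $[T,f_{\bm}]=-f_{\bm}$, $[T,g_{\bm}]=[T,h_{\bm}]=[T,d]=[T,d_1]=[T,d_2]=0$ (i.e.\ the $\mathrm{ad}\,T$-eigenvalues $1,-1,0$) and concludes the seven identities, exactly as you do. You merely spell out the factor-by-factor iteration that the paper leaves implicit, so there is nothing to correct.
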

\begin{proof}~Since $[T,e_{\bm}]=e_{\bm}$, $[T,f_{\bm}]=-f_{\bm}$, $[T,g_{\bm}]=[T,h_{\bm}]=[T,d]=[T,d_1]=[T,d_2]=0$, we obtain equations from \eqref{TdEe-Te} to \eqref{TdEe-Td2}.
\end{proof}

\begin{lemm}\label{lemm-5-2}
The following identities hold in $\UU(\wsl)$:
\begin{eqnarray}
&&(e_{\bm}\otimes1)I_c=I_{c-1}(e_{\bm}\otimes1),\label{TdEe-eI}\\
&&(f_{\bm}\otimes1)I_c=I_{c+1}(f_{\bm}\otimes1),\label{TdEe-fI}\\
&&(g_{\bm}\otimes1)I_c=I_{c}(g_{\bm}\otimes1),\label{TdEe-gI}\\
&&(h_{\bm}\otimes1)I_c=I_{c}(h_{\bm}\otimes1),\label{TdEe-hI}\\
&&(d\otimes1)I_c=I_{c}(d\otimes1),\label{TdEe-dI}\\
&&(d_1\otimes1)I_c=I_{c}(d_1\otimes1),\label{TdEe-d1I}\\
&&(d_2\otimes1)I_c=I_{c}(d_2\otimes1).\label{TdEe-d2I}
\end{eqnarray}
\end{lemm}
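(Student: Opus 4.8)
The plan is to reduce each of the seven identities to the defining expansion $I_c=\sum_{i=0}^{\infty}\frac{1}{i!}T_c^{<i>}\otimes E^it^i$ together with the commutation relations just recorded in Lemma \ref{lemm-5-1}. First I would observe that, since the left factor $l_{\bm}\otimes1$ is trivial in the second tensor slot, left multiplication only touches the first slot:
\begin{eqnarray*}
(l_{\bm}\otimes1)I_c=\mbox{$\sum\limits_{i=0}^{\infty}$}\frac{1}{i!}\big(l_{\bm}T_c^{<i>}\big)\otimes E^it^i.
\end{eqnarray*}
Thus the whole task is to push $l_{\bm}$ through $T_c^{<i>}$ in the first factor, and the element $E$ in the second factor is never disturbed; in particular no relation between $l_{\bm}$ and $E$ will be needed.

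Next I would apply Lemma \ref{lemm-5-1} uniformly. Writing $[T,l_{\bm}]=\lambda l_{\bm}$ for the weight of $l_{\bm}$ under $\operatorname{ad}T$ (with $T=\frac12 d$), that lemma gives precisely $l_{\bm}T_c^{<i>}=T_{c-\lambda}^{<i>}l_{\bm}$. Substituting this and factoring $(l_{\bm}\otimes1)$ out to the right then yields
\begin{eqnarray*}
(l_{\bm}\otimes1)I_c=\mbox{$\sum\limits_{i=0}^{\infty}$}\frac{1}{i!}T_{c-\lambda}^{<i>}l_{\bm}\otimes E^it^i=I_{c-\lambda}(l_{\bm}\otimes1),
\end{eqnarray*}
which is exactly the asserted identity with the index shift $c\mapsto c-\lambda$. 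Reading the weights off Lemma \ref{lemm-5-1}, namely $\lambda=1$ for $e_{\bm}$, $\lambda=-1$ for $f_{\bm}$, and $\lambda=0$ for $g_{\bm},h_{\bm},d,d_1,d_2$, one obtains the shifts $c-1$, $c+1$ and $c$, matching \eqref{TdEe-eI}--\eqref{TdEe-d2I} line by line. This is the same computation that established \eqref{Td12Eg-Il} in Lemma \ref{lemm-3-2}; the only novelty here is that, because $T=\frac12 d$ rather than $x_1d_1+x_2d_2$, the elements $e_{\bm}$ and $f_{\bm}$ now carry nonzero $\operatorname{ad}T$-weight and therefore genuinely move the index, whereas in Section 3 they were weight zero.

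I do not expect a genuine obstacle: each of the seven identities is a one-line verification once Lemma \ref{lemm-5-1} is in hand. The single point deserving care is the bookkeeping of the sign of the shift---confirming that a positive weight $\lambda$ produces the decrement $c\mapsto c-\lambda$ rather than an increment. This is forced by rewriting $[T,l_{\bm}]=\lambda l_{\bm}$ as $l_{\bm}(T+c)=(T+c-\lambda)l_{\bm}$ and iterating $i$ times, exactly as in the derivation of \eqref{TdEe-Te}. Consequently these seven statements amount to nothing more than a repackaging of the weight information in Lemma \ref{lemm-5-1}, carried into the first tensor leg of $I_c$.
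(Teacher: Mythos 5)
Your proposal is correct and follows essentially the same route as the paper: the paper also expands $I_c$, pushes the element through $T_c^{<i>}$ via Lemma \ref{lemm-5-1} (doing this explicitly for $e_{\bm}$ and invoking ``the similar method'' for the rest), and reads off the shifted index. Your uniform packaging via the $\operatorname{ad}T$-weight $\lambda$, with the sign check $l_{\bm}(T+c)=(T+c-\lambda)l_{\bm}$, is just a tidier presentation of the identical computation.
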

\begin{proof} The equation \eqref{TdEe-eI} can be obtained as following by equation \eqref{TdEe-Te},
\begin{eqnarray*}
(e_{\bm}\otimes1)I_c=\mbox{$\sum\limits_{i=0}^{\infty}$}\frac{1}{i!}e_{\bm}T_c^{<i>}\otimes E^it^i=\mbox{$\sum\limits_{i=0}^{\infty}$}\frac{1}{i!}T_{c-1}^{<i>}e_{\bm}\otimes E^it^i=I_{c-1}(e_{\bm}\otimes1).
\end{eqnarray*}
By the similar method, we obtain equations from \eqref{TdEe-fI} to \eqref{TdEe-d2I}.
\end{proof}

\begin{lemm}
The following identities hold in $\UU(\wsl)$:
\begin{eqnarray}
&&e_{\bm}J_c=J_{c+1}e_{\bm},\label{TdEe-ej}\\
&&d_1J_c=J_cd_1-n_1J_cT_{-c}Et,\label{TdEe-d1j}\\
&&d_2J_c=J_cd_2-n_2J_cT_{-c}Et,\label{TdEe-d2j}\\
&&f_{\bm}J_c=\begin{cases}
q^{m_1n_2}J_{c-1}(T_{-c+1}g_{\bm+\bn}t)-q^{m_2n_1}J_{c-1}(h_{\bm+\bn}T_{1-c}t)\\
+J_{c-1}f_{\bm}-s_{\bm}J_{c-1}(e_{\bm+2\bn}T_{1-c}^{<2>}t^2),&\bm+\bn\neq0,\\ \\
J_{c-1}f_{-\bn}+q^{-n_1n_2}J_{c-1}(dT_{1-c}t)-q^{-n_1n_2}J_{c-1}ET_{1-c}^{<2>}t^2,&\bm+\bn=0,
\end{cases}\label{TdEe-fj}\\
&&g_{\bm}J_c=J_{c}g_{\bm}-q^{n_1m_2}J_{c}(e_{\bm+\bn}T_{1-c}t),\label{TdEe-gj}\\
&&h_{\bm}J_c=J_{c}h_{\bm}+q^{n_2m_1}J_{c}(e_{\bm+\bn}T_{1-c}t),\label{TdEe-hj}\\
&&dJ_c=J_cd-2J_c(ET_{1-c}t).\label{TdEe-dj}
\end{eqnarray}
\begin{proof}
Using the formula \eqref{equa-J}, \eqref{Td12Ee-deE} and \eqref{TdEe-Te}, we have
\begin{eqnarray*}
e_{\bm}J_c=\mbox{$\sum\limits_{i=0}^{\infty}$}\frac{(-1)^i}{i!}e_{\bm}T_{-c}^{[i]}E^it^i
=\mbox{$\sum\limits_{i=0}^{\infty}$}\frac{(-1)^i}{i!}T_{-c-1}^{[i]}e_{\bm}E^it^i
=\mbox{$\sum\limits_{i=0}^{\infty}$}\frac{(-1)^i}{i!}T_{-c-1}^{[i]}E^ie_{\bm}t^i=J_{c+1}e_{\bm}.
\end{eqnarray*}

Formulas \eqref{TdEe-d1j} and \eqref{TdEe-d2j} are the same as those presented in Lemma \ref{lemm-3-3}. For $\bm\neq-\bn$, there is
\begin{eqnarray*}
f_{\bm}J_c\!\!\!\!\!\!\!&&=\mbox{$\sum\limits_{i=0}^{\infty}$}\frac{(-1)^i}{i!}f_{\bm}T_{-c}^{[i]}E^it^i
=\mbox{$\sum\limits_{i=0}^{\infty}$}\frac{(-1)^i}{i!}T_{-c+1}^{[i]}f_{\bm}E^it^i\\
&&=\mbox{$\sum\limits_{i=0}^{\infty}$}\frac{(-1)^i}{i!}T_{-c+1}^{[i]}(E^if_{\bm}\!\!-\!iq^{m_1n_2}E^{i-1}g_{\bm+\bn}
\!\!+iq^{m_2n_1}E^{i-1}h_{\bm+\bn}\!\!-\!2s_{\bm}\dbinom{i}{2}E^{i-2}e_{\bm+2\bn})t^i\\
&&=\mbox{$\sum\limits_{i=0}^{\infty}$}\frac{(-1)^i}{i!}T_{-c+1}^{[i]}E^if_{\bm}t^i-q^{m_1n_2}\mbox{$\sum\limits_{i=0}^{\infty}$}\frac{(-1)^{i+1}}{i!}T_{-c+1}^{[i+1]}E^{i}g_{\bm+\bn}t^{i+1}\\
&&\ \ \ \ +q^{m_2n_1}\mbox{$\sum\limits_{i=0}^{\infty}$}\frac{(-1)^{i+1}}{i!}T_{-c+1}^{[i+1]}E^{i}h_{\bm+\bn}t^{i+1}-s_{\bm}\mbox{$\sum\limits_{i=0}^{\infty}$}\frac{(-1)^{i+2}}{i!}T_{-c+1}^{[i+2]}E^{i}e_{\bm+2\bn}t^{i+2}\\
&&=J_{c-1}f_{\bm}+q^{m_1n_2}J_{c-1}(g_{\bm+\bn}T_{1-c}t)-q^{m_2n_1}J_{c-1}(h_{\bm+\bn}T_{1-c}t)-s_{\bm}J_{c-1}(e_{\bm+2\bn}T_{1-c}^{<2>}t^2).
\end{eqnarray*}
Furthermore, there is
\begin{eqnarray*}
f_{-\bn}J_c\!\!\!\!\!\!\!&&=\mbox{$\sum\limits_{i=0}^{\infty}$}\frac{(-1)^i}{i!}f_{-\bn}T_{-c}^{[i]}E^it^i
=\mbox{$\sum\limits_{i=0}^{\infty}$}\frac{(-1)^i}{i!}T_{-c+1}^{[i]}f_{-\bn}E^it^i\\
&&=\mbox{$\sum\limits_{i=0}^{\infty}$}\frac{(-1)^i}{i!}T_{-c+1}^{[i]}(E^if_{-\bn}-iq^{-n_2n_1}E^{i-1}d
-2q^{-n_2n_1}\dbinom{i}{2}E^{i-1})t^i\\
&&=\mbox{$\sum\limits_{i=0}^{\infty}$}\frac{(-1)^i}{i!}T_{-c+1}^{[i]}E^if_{-\bn}t^i
-q^{-n_2n_1}\mbox{$\sum\limits_{i=0}^{\infty}$}\frac{(-1)^{i+1}}{i!}T_{-c+1}^{[i+1]}E^{i}dt^{i+1}\\
&&\ \ \ \ -q^{-n_1n_2}\mbox{$\sum\limits_{i=0}^{\infty}$}\frac{(-1)^{i+2}}{i!}T_{-c+1}^{[i+2]}E^{i+1}t^{i+2}\\
&&=J_{c-1}f_{-\bn}+q^{-n_2n_1}J_{c-1}T_{1-c}dt-q^{-n_1n_2}J_{c-1}T_{1-c}^{[2]}Et^2\\
&&=J_{c-1}f_{-\bn}+q^{-n_1n_2}J_{c-1}dT_{1-c}t-q^{-n_1n_2}J_{c-1}ET_{1-c}^{<2>}t^2.
\end{eqnarray*}
Thus, we obtain \eqref{TdEe-fj}. Equations \eqref{TdEe-gj}  to \eqref{TdEe-dj} could be obtained by the formulas \eqref{x<>[]}, \eqref{equa-J} and formulas from \eqref{TdEe-Tg} to \eqref{TdEe-Td},
\begin{eqnarray*}
g_{\bm}J_c\!\!\!\!\!\!\!&&=\mbox{$\sum\limits_{i=0}^{\infty}$}\frac{(-1)^i}{i!}g_{\bm}T_{-c}^{[i]}E^it^i
=\mbox{$\sum\limits_{i=0}^{\infty}$}\frac{(-1)^i}{i!}T_{-c}^{[i]}g_{\bm}E^it^i\\
&&=\mbox{$\sum\limits_{i=0}^{\infty}$}\frac{(-1)^i}{i!}T_{-c}^{[i]}(E^ig_{\bm}+iq^{m_2n_1}E^{i-1}e_{\bm+\bn})t^i\\
&&=\mbox{$\sum\limits_{i=0}^{\infty}$}\frac{(-1)^i}{i!}T_{-c}^{[i]}E^ig_{\bm}t^i
+q^{m_2n_1}\mbox{$\sum\limits_{i=0}^{\infty}$}\frac{(-1)^{i+1}}{i!}T_{-c}^{[i+1]}E^ie_{\bm+\bn}t^{i+1}\\
&&=J_{c}g_{\bm}-q^{m_2n_1}J_{c}e_{\bm+\bn}T_{1-c}t,\\
h_{\bm}J_c\!\!\!\!\!\!\!&&=\mbox{$\sum\limits_{i=0}^{\infty}$}\frac{(-1)^i}{i!}h_{\bm}T_{-c}^{[i]}E^it^i
=\mbox{$\sum\limits_{i=0}^{\infty}$}\frac{(-1)^i}{i!}T_{-c}^{[i]}h_{\bm}E^it^i\\
&&=\mbox{$\sum\limits_{i=0}^{\infty}$}\frac{(-1)^i}{i!}T_{-c}^{[i]}(E^ih_{\bm}-iq^{m_1n_2}E^{i-1}e_{\bm+\bn})t^i\\
&&=\mbox{$\sum\limits_{i=0}^{\infty}$}\frac{(-1)^i}{i!}T_{-c}^{[i]}E^ih_{\bm}t^i
-q^{m_1n_2}\mbox{$\sum\limits_{i=0}^{\infty}$}\frac{(-1)^{i+1}}{i!}T_{-c}^{[i+1]}E^ie_{\bm+\bn}t^{i+1}\\
&&=J_{c}h_{\bm}+q^{m_1n_2}J_{c}e_{\bm+\bn}T_{1-c}t,\\
dJ_c\!\!\!\!\!\!\!&&=\mbox{$\sum\limits_{i=0}^{\infty}$}\frac{(-1)^i}{i!}dT_{-c}^{[i]}E^it^i
=\mbox{$\sum\limits_{i=0}^{\infty}$}\frac{(-1)^i}{i!}T_{-c}^{[i]}dE^it^i
=\mbox{$\sum\limits_{i=0}^{\infty}$}\frac{(-1)^i}{i!}T_{-c}^{[i]}(E^id+2iE^i)t^i\\
&&=\mbox{$\sum\limits_{i=0}^{\infty}$}\frac{(-1)^i}{i!}T_{-c}^{[i]}E^idt^i
+2\mbox{$\sum\limits_{i=0}^{\infty}$}\frac{(-1)^{i+1}}{i!}T_{-c}^{[i+1]}E^{i+1}t^{i+1}
=J_cd-2J_cET_{1-c}t.
\end{eqnarray*}
\end{proof}
\end{lemm}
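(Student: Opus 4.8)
The plan is to substitute the closed form $J_c=\sum_{i\ge0}\frac{(-1)^i}{i!}T_{-c}^{[i]}E^it^i$ from \eqref{equa-J} into each product $l_{\bm}J_c$ and then carry the generator $l_{\bm}$ to the right one factor at a time. First I would commute $l_{\bm}$ past the descending factorial $T_{-c}^{[i]}$. Because $T=\frac{1}{2}d$ assigns the fixed weights $[T,e_{\bm}]=e_{\bm}$, $[T,f_{\bm}]=-f_{\bm}$ and $[T,g_{\bm}]=[T,h_{\bm}]=[T,d]=[T,d_i]=0$, Lemma~\ref{lemm-5-1} shows this merely shifts the base of the factorial: $T_{-c}^{[i]}$ becomes $T_{-c-1}^{[i]}$ for $e_{\bm}$, $T_{-c+1}^{[i]}$ for $f_{\bm}$, and is unchanged for the weight-zero generators. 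This produces exactly the prefactors $J_{c+1}$, $J_{c-1}$ and $J_c$ of the right-hand sides. Next I would commute $l_{\bm}$ past $E^i$ via Lemma~\ref{lemm-4-1}, which applies verbatim since $E=e_{\bn}$ is the same element used in Section~4.

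The centralizing cases are immediate. As $e_{\bm}$ commutes with $E$ by \eqref{Td12Ee-deE}, no correction survives and $e_{\bm}J_c=J_{c+1}e_{\bm}$. For $d_1,d_2$ the relations $d_iE^j=E^jd_i+jn_iE^j$ from \eqref{Td12Ee-d1d2E} reproduce verbatim the computation of Lemma~\ref{lemm-3-3}, yielding the $-n_iJ_cT_{-c}Et$ corrections. For $g_{\bm}$, $h_{\bm}$ and $d$ the relevant relations \eqref{Td12Ee-gE}, \eqref{Td12Ee-hE} and \eqref{Td12Ee-deE} each contribute a single term with one lowered power of $E$; after passing $l_{\bm}$ through $T_{-c}^{[i]}$ I would reindex that correction by $i\mapsto i+1$ and use $T_{-c}^{[i+1]}=T_{-c}^{[i]}T_{-c-i}$ from \eqref{x<>[]} together with the weight relation $E^iT_a=T_{a-i}E^i$ to split off the leading $J_c$, leaving the tails $\mp q^{\cdots}e_{\bm+\bn}T_{1-c}t$ and $-2ET_{1-c}t$.

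The genuinely delicate identity is \eqref{TdEe-fj} for $f_{\bm}J_c$. Here \eqref{Td12Ee-fE} expands $f_{\bm}E^j$ into four pieces (the principal $E^jf_{\bm}$, two first-order terms in $g_{\bm+\bn}$ and $h_{\bm+\bn}$, and a second-order term in $e_{\bm+2\bn}$), while the degenerate case $\bm+\bn=\bz$ instead produces a $d$-term and an $E$-term. My plan is to treat each summand separately: after replacing $T_{-c}^{[i]}$ by $T_{-c+1}^{[i]}$ (the weight $-1$ of $f_{\bm}$), I would reindex the first- and second-order corrections by $i\mapsto i+1$ and $i\mapsto i+2$, and then apply \eqref{x<>[]} repeatedly. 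The crucial step is the conversion $T_a^{[r]}=T_{a-r+1}^{<r>}$, which—combined with the shift $T_aE=ET_{a+1}$—turns the trailing factorial $T_{-c+1-i}^{[2]}$ coming from $T_{-c+1}^{[i+2]}=T_{-c+1}^{[i]}T_{-c+1-i}^{[2]}$ into the ascending $T_{1-c}^{<2>}$ displayed in the statement, and likewise yields $T_{1-c}$ in the first-order terms.

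I expect this last case to be the main obstacle: keeping the scalars $q^{m_1n_2}$, $q^{m_2n_1}$ and $s_{\bm}$ correctly attached while the $E$-powers, the factorial bases and the powers of $t$ all shift in lockstep, and checking the final $[\,]$-to-$\langle\,\rangle$ rewriting in the $\bm+\bn=\bz$ answer, is where the computation is easiest to mishandle. Once these reindexings are verified, collecting the pieces gives the stated prefactor $J_{c-1}$ times the displayed tail; the remaining identities then follow by the same but shorter arguments.
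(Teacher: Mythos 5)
Your proposal is correct and follows essentially the same route as the paper: expand $J_c$ via \eqref{equa-J}, commute the generator past $T_{-c}^{[i]}$ using the weight shifts of Lemma \ref{lemm-5-1}, expand the product with $E^i$ via the Section 4 relations \eqref{Td12Ee-deE}--\eqref{Td12Ee-hE}, then reindex and use $x_a^{[r+s]}=x_a^{[r]}x_{a-r}^{[s]}$ and $x_a^{[r]}=x_{a-r+1}^{<r>}$ to reassemble the prefactors $J_{c+1}$, $J_{c-1}$, $J_c$ and the ascending-factorial tails. The steps you single out as delicate (the $f_{\bm}$ case and the $[\,]$-to-$\langle\,\rangle$ conversion) are exactly the ones the paper carries out explicitly, and your handling of them matches the paper's computation.
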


\begin{proof}[Proof of Theorem \ref{main}\,(3)]
Using equations \eqref{equa-D-D0}, \eqref{equa-II-JJ} and all the lemmas above on this section, we obtain, for $\bm+\bn\neq 0$,
\begin{eqnarray*}
&\D(f_{\bm})\!\!\!\!&=\II \D_0(f_{\bm}) \II^{-1}=\II(f_{\bm}\otimes1+1\otimes f_{\bm})I\\
&&=\II I_{1}(f_{\bm}\otimes1)+\II I(1\otimes f_{\bm})-q^{m_1n_2}\II I_{1}(T\otimes g_{\bm+\bn}t)\\
&&\ \ \ \ +q^{m_2n_1}\II I_{1}(T\otimes h_{\bm+\bn}t)-s_{\bm}\II I_{2}(T^{<2>}\otimes e_{\bm+2\bn}t^2)\\
&&=f_{\bm}\otimes(1-Et)^{-1}
+1\otimes f_{\bm}-q^{m_1n_2}T\otimes (1-Et)^{-1}g_{\bm+\bn}t\\
&&\ \ \ \ +q^{m_2n_1}T\otimes (1-Et)^{-1}h_{\bm+\bn}t-s_{\bm}T^{<2>}\otimes (1-Et)^{-2}e_{\bm+2\bn}t^2,\\
&\D(f_{-\bn})\!\!\!\!&=\II \D_0(f_{-\bn}) \II^{-1}=\II(f_{-\bn}\otimes1+1\otimes f_{-\bn})I\\
&&=\II I_{1}(f_{-\bn}\otimes1)-q^{-n_2n_1}\II I_{1}(T\otimes dt)\\
&&\ \ \ \ +\II I(1\otimes f_{-\bn})-q^{-n_1n_2}\II I_{2}(T^{<2>}\otimes Et^2)\\
&&=f_{-\bn}\otimes(1-Et)^{-1}-q^{-n_2n_1}T\otimes(1-Et)^{-1} dt\\
&&\ \ \ \ +1\otimes f_{-\bn}-q^{-n_1n_2}T^{<2>}\otimes (1-Et)^{-2}t^2,\\
&S(f_{\bm})\!\!\!\!&=\JJ s_0(f_{\bm})J=-\JJ f_{\bm}J\\
&&=q^{m_2n_1}\JJ J_{-1}(h_{\bm+\bn}T_{1}t)-q^{m_1n_2}\JJ J_{-1}(T_{1}g_{\bm+\bn}t)\\
&&\ \ \ \ -\JJ J_{-1}f_{\bm}
+s_{\bm}\JJ J_{-1}(e_{\bm+2\bn}T_{1}^{<2>}t^2)\\
&&=q^{m_2n_1}(1-Et)^{-1}h_{\bm+\bn}T_{1}t
-q^{m_1n_2}(1-Et)^{-1}T_1g_{\bm+\bn}t\\
&&\ \ \ \ -(1-Et)^{-1}f_{\bm}
+s_{\bm}(1-Et)^{-1}e_{\bm+2\bn}T_{1}^{<2>}t^2,\\
&S(f_{-\bn})\!\!\!\!&=\JJ s_0(f_{-\bn})J=-\JJ f_{-\bn}J\\
&&=-\JJ(J_{-1}f_{-\bn}+q^{-n_1n_2}J_{-1}(dT_{1}t)-q^{-n_1n_2}J_{-1}ET_{1}^{<2>}t^2)\\
&&=-(1-Et)^{-1}f_{-\bn}-q^{-n_1n_2}(1-Et)^{-1}dT_1t+q^{-n_1n_2}(1-Et)^{-1}ET_{1}^{<2>}t^2.
\end{eqnarray*}
What is more, for any $\bm\in\BZ$, we also obtain,
\begin{eqnarray*}
&\D(e_{\bm})\!\!\!\!&=\II \D_0(e_{\bm}) \II^{-1}=\II(e_{\bm}\otimes1+1\otimes e_{\bm})I\\
&&=\II I_{-1}(e_{\bm}\otimes1)+\II I(1\otimes e_{\bm})\\
&&=e_{\bm}\otimes(1-Et)+1\otimes e_{\bm},\\
&\D(g_{\bm})\!\!\!\!&=\II \D_0(g_{\bm}) \II^{-1}=\II(g_{\bm}\otimes1+1\otimes g_{\bm})I\\
&&=\II I(g_{\bm}\otimes1)+\II(I(1\otimes g_{\bm})+q^{m_2n_1}I_{1}(T\otimes e_{\bm+\bn}t))\\
&&=g_{\bm}\otimes 1+
1\otimes g_{\bm}+q^{m_2n_1}T\otimes (1-Et)^{-1}e_{\bm+\bn}t,\\
&\D(h_{\bm})\!\!\!\!&=\II \D_0(h_{\bm}) \II^{-1}=\II(h_{\bm}\otimes1+1\otimes h_{\bm})I\\
&&=\II I_{0}(h_{\bm}\otimes1)+\II(I(1\otimes h_{\bm})-q^{m_1n_2}I_{1}(T\otimes e_{\bm+\bn}t))\\
&&=h_{\bm}\otimes 1+
1\otimes h_{\bm}-q^{m_1n_2}T\otimes (1-Et)^{-1}e_{\bm+\bn}t,\\
&\D(d)\!\!\!\!&=\II \D_0(d) \II^{-1}=\II(d\otimes1+1\otimes d)I\\
&&=\II I(d\otimes1)+\II(I(1\otimes d)+2I_{1}(T\otimes Et
))\\
&&=d\otimes1+1\otimes d+2T\otimes (1-Et)^{-1}Et,\\
&\D(d_1)\!\!\!\!&=\II \D_0(d_1) \II^{-1}=\II(d_1\otimes1+1\otimes d_1)I\\
&&=\II I(d_1\otimes1)+\II (n_1I_{1}(T\otimes Et)+I(1\otimes d_1))\\
&&=d_1\otimes1+1\otimes d_1+n_1T\otimes(1-Et)^{-1}-n_1T\otimes1,\\
&\D(d_2)\!\!\!\!&=\II \D_0(d_2) \II^{-1}=\II(d_2\otimes1+1\otimes d_2)I\\
&&=\II I(d_2\otimes1)+\II (n_2I_{1}(T\otimes Et)+I(1\otimes d_2))\\
&&=d_2\otimes1+1\otimes d_2+n_2T\otimes(1-Et)^{-1}-n_2T\otimes1.\\
&S(e_{\bm})\!\!\!\!&=\JJ s_0(e_{\bm})J=-\JJ e_{\bm}J=-\JJ J_{1}e_{\bm}=-(1-Et)e_{\bm},\\
&S(g_{\bm})\!\!\!\!&=\JJ s_0(g_{\bm})J=-\JJ g_{\bm}J=-\JJ(Jg_{\bm}-q^{m_2n_1}J(e_{\bm+\bn}T_{1}t))\\
&&=-g_{\bm}+q^{m_2n_1}e_{\bm+\bn}T_{1}t,\\
&S(h_{\bm})\!\!\!\!&=\JJ s_0(h_{\bm})J=-\JJ h_{\bm}J=-\JJ(Jh_{\bm}+q^{n_2m_1}J(e_{\bm+\bn}T_{1}t))\\
&&=-h_{\bm}-q^{n_2m_1}e_{\bm+\bn}T_{1}t,\\
&S(d)\!\!\!\!&=\JJ s_0(d)J=-\JJ dJ=-\JJ(Jd-2J(ET_{1}t))=-d+2ET_1t,\\
&S(d_1)\!\!\!\!&=\JJ s_0(d_1)J=-\JJ d_1J=-\JJ(Jd_1-n_1JTEt)
=-d_1+n_1TEt,\\
&S(d_2)\!\!\!\!&=\JJ s_0(d_2)J=-\JJ d_2J=-\JJ(Jd_2-n_2JTEt)
=-d_2+n_2TEt.
\end{eqnarray*}
\end{proof}

\noindent{\bf Acknowledgements}\ \ The authors would sincerely like to thank Professor Yucai Su for his supervision and invaluable comments.\vskip8pt

\small

\end{CJK*}

\begin{thebibliography}{9999}\parskip2pt\lineskip4pt

\bibitem{AG} B. Allison, Y. Gao, The root system and the core of an extended
affine Lie algebra, {\it Selecta Math. (N.S.)}, {\bf 7}(2) (2001), 149--212.

\bibitem{AABGP} B. Allison, S. Azam, S. Berman, Y. Gao, A. Pianzola, Extended affine Lie algebras and their root systems, {\it Mem. Amer. Math. Soc.}, {\bf 126}(603) (1997).

\bibitem{B} Y. Billig, Representations of toroidal extended affine Lie algebras, {\it J. Algebra}, {\bf 308} (2007), 252--269.

\bibitem{BGK} S. Berman, Y. Gao, Y. Krylyuk, Quantum tori and the structure of elliptic quasi-simple Lie algebras, {\it J. Funct. Anal.}, {\bf 135} (1996) 339--389.

\bibitem{BGKE} S. Berman, Y. Gao, Y. Krylyuk,  E. Neher, The alternative torus and the structure of elliptic quasisimple Lie algebras of type $A_2$, {\it Trans. Amer. Math. Soc.}, {\bf 347}(11) (1995), 4315--4363.

\bibitem{BL} Y. Billig, M. Lau, Irreducible modules for extended affine Lie algebras, {\it J. Algebra}, {\bf 327} (2011) 208--235.

\bibitem{CS}Y. Cheng, Y. Su, Quantization of Lie algebras of block type, {\it Acta Mathematica Scientia}, {\bf 30}(4), (2010), 1134--1142.

\bibitem{D} V. Drinfel'd, Constant quasiclassical solutions of the
Yang-Baxter quantum equation, {\it Soviet Math. Dokl.}, {\bf28}(3) (1983),
667--671.

\bibitem{D1} V. Drinfel'd, Quantum groups, Proceedings ICM (Berkeley 1986),
Providence: Amer Math Soc, (1987) 789--820.

\bibitem{D2} V. Drinfel'd, Quantum groups, in: {\it Proceeding of the
International Congress of Mathematicians}, Vol. 1, 2, Berkeley,
Calif. 1986, Amer. Math. Soc., Providence, RI, 1987, 798--820.

\bibitem{EK} P. Etingof, D. Kazhdan, Quantization of Lie bialgebras I,
{\it Selecta Math.}, 2(1) (1996) 1--41.

\bibitem{EK2} P. Etingof, D. Kazhdan, Quantization of Lie bialgebras II,
{\it Selecta Math.}, 2(4) (1998) 213--231.

\bibitem{ES} P. Etingof, O. Schiffmann, Lectures on Quantum Groups, 2ed,
International Press, USA, 2002.

\bibitem{G1} Y. Gao, Representations of extended affine Lie algebras coordinatized by certain quantum tori, {\it Compositio Math.}, {\bf 123}(1) (2000), 1--25\vs{-7pt}.

\bibitem{G2} Y. Gao, Fermionic and bosonic representations of the extended affine Lie algebra \mbox{\footnotesize$\widetilde{\frak{gl}_N(\mathbb{C}_q)}$}, {\it Cananian Math. Bull.}, {\bf 45} (2002), 623--633\vs{-7pt}.

\bibitem{GZ} Y. Gao, Z. Zeng, Hermitian representations of the extended affine Lie algebra \mbox{\footnotesize$\widetilde{\frak{gl}_2(\mathbb{C}_q)}$}, {\it Adv. Math.}, {\bf 207}(1) (2006), 244--265.

\bibitem{G} C. Grunspan, Quantizations of the Witt algebra and of simple
Lie algebras in characteristic $p$, J. Algebra, 280 (2004), 145--161.

\bibitem{GZh} A. Giaquinto, J. Zhang, Bialgebra action, twists and universal deformation formulas, J. Pure Appl. Algebra, 128(2) (1998), 133--151.

\bibitem{HT} R. H{\o}egh-Krohn, B. Torresani, Classification and construction of quasi-simple Lie algebras, {\it J. Funct. Anal.}, {\bf 89} (1990), 106--136.

\bibitem{JM} C. Jiang, D. Meng, The derivation algebra of the associative algebra $C_q[X,Y,X^{-1},Y^{-1}]$, {\it Comm. Algebra}, {\bf 26}(6) (1998), 1723--1736.

\bibitem{LS}J. Li, Y. Su, Quantizations of the W-algebra W(2, 2), {\it Acta Mathematica Sinica}, {\bf 27}(4) (2011), 647--656.

\bibitem{LwS} W. Lin, Y. Su, Modules for the core of extended affine Lie algebras of type $A_1$ with coordinates in rank 2 quantum tori, {\it Pacific J. Math.}, {\bf 242}(1) (2009), 143--166.

\bibitem{N} E. Neher, Extended affine Lie algebras, {\it C. R. Math. Acad. Sci. Soc. R. Can.}, {\bf 26} (2004) 90--96.

\bibitem{SSW}G. Song, Y. Su, Y. Wu, Quantization of generalized Virasoro-like algebras, {\it Linear Algebra and its Applications}, {\bf 428}, (2008), 2888--2899.

\bibitem{XL}Y. Xu, J. Li, Lie bialgebra structures on the extended affine Lie algebra \wsl, arXiv:1102.5226v2.








\end{thebibliography}
\end{document}